\theoremstyle{plain}
\newtheorem{thm}{Theorem}[section]
\newtheorem*{mt*}{Main Theorem}
\newtheorem{prop}[thm]{Proposition}
\newtheorem{lemma}[thm]{Lemma}
\newtheorem{cor}{Corollary}
\newtheorem{rem}{Remark}
\newtheorem{example}{Example}
\theoremstyle{definition}
\newcommand{\ideal}[1]{\mathfrak{#1}}
\newcommand{\m}{\ideal{m}}
\newcommand{\p}{\ideal{p}}
\newcommand{\q}{\ideal{q}}
\newcommand{\func}[1]{\mathrm{#1} \,}
\newcommand{\Ass}{\func{Ass}}
\newcommand{\Ann}{\func{Ann}}
\newcommand{\Supp}{\func{Supp}}
\newcommand{\Spec}{\func{Spec}}
\newcommand{\rad}{\func{rad}}
\newcommand{\V}{\func{V}}
\newcommand{\T}{\func{T}}
\newcommand{\Z}{\func{Z}}
\newcommand{\ZZ}{{\mathbb Z}}
\title[]{Behaviour of the Support of Lyubeznik Functors under Ring Extensions}
\author[]{Rajsekhar Bhattacharyya}
\address{Dinabandhu Andrews College,\newline 54 Raja S.C. Mallick Road, Garia,\newline Kolkata 700084,\newline India}
\email{rbhattacharyya@gmail.com}
\keywords{Local Cohomology}
\subjclass[2010]{13D45}
\begin{document}

\begin{abstract}
Let $R\rightarrow S$ be an arbitrary ring extension of Noetherian rings. In this article we study the behaviour of Zariski closedness of the support of Lyubeznik functors $\T$, when the ring extension $R\rightarrow S$ is namely `flat', `faithfully flat', `pure' and lastly `cyclically pure'. We show that the Zariski closedness of the support comes down from extended ring to the base ring for faithfully flat, pure and finally for cyclically pure ring extensions. Lastly, we focus on a special case of pure extension i.e. when $R$ is a direct summand of $S$ and we compare the sets $\Supp_S(\T(R)\otimes_R S)$ and $\Supp_S \T(S)$. 
\end{abstract}

\maketitle

\section{introduction}

Let $R$ be a Noetherian ring and let $M$ be an arbitrary $R$-module. For an ideal $I\subset R$, and for an integer $i\geq 0$, $i$-th local cohomology module with support in the ideal $I$ is denoted by $H^i_I (M)$. In general, local cohomology modules are not finitely generated modules. In \cite{Hu}, it was asked that whether local cohomology modules of Noetherian rings have finitely many associated prime ideals. But, there are examples given in \cite{Si}, \cite{Ka}, and \cite{SS}, which show that the set of associated primes of a local cohomology module $H^i_I (R)$ of a Noetherian ring $R$ with support in an ideal $I$ can be infinite. But, it remains an open question whether the set of primes minimal in the support of such local cohomology module is always finite \cite{HKM}, which is equivalent to the question whether support of such local cohomology modules are always Zariski closed. 

Let $R\rightarrow S$ be an arbitrary ring extension of Noetherian rings. In this article we study the behaviour of Zariski closedness of the support of local cohomology modules, more generally of Lyubeznik functors, when the ring extension $R\rightarrow S$ is namely `flat', `faithfully flat', `pure' and lastly `cyclically pure'. Here, we account for a brief description of Lyubeznik functor: Let $\Z$ be a closed subset of $\Spec R$ and $M$ be an $R$-module. We set $H^i_{\Z} (M)$ as the $i$-th local cohomology module of $M$ with support in $\Z$. We notice that $H^i_{\Z} (M)=H^i_I (M)$, for $\Z=\V(I)=\{P\in\Spec R: I\subset P\}$. For any two closed subsets of $\Spec R$, $\Z_1\subset \Z_2$, there is a long exact sequence of functors

$$\ldots\to H^i_{\Z_1}\to H^i_{\Z_2}\to H^i_{\Z_1/\Z_2}\to \ldots$$

We set $\T =\T_1\circ \dots\circ\T_t$, where every functor $\T_j$ is either $H^i_{\Z}$ for some closed subset $\Z$ of $\Spec R$ or the kernel or image (or cokernel) of some map in the above long exact sequence. $\T$ is known as Lyubeznik functor.

In section 2, at first, we extend the `Bourbaki formula' for the associated primes of flat extensions to that of the minimal associated primes and then we use it to study flat and faithfully flat ring extensions. As for example, for a faithfully flat extension, we obtain that $\Supp_{S} \T(S)$ is a Zariski closed subset of $\Spec S$ if and only if $\Supp_{R} \T(R)$ is a Zariski Closed subset of $\Spec R$. We also present few applications of the resuts of above `flat and faithfully flat' situations.

In section 3, we study the behaviour of the support under pure and cyclically pure ring extension. In Theorem 3.4, for an arbitrary module over the base ring, we observe how the Zariski closedness of the support of local cohomology modules can be transferred from the extension ring to its pure subring. In Theorem 3.6, we show that if base ring is local, then Zariski closedness of the support of Lyubeznik functor $\T$ can be transferred from the ring to its pure subring. Then, in Theorem 3.8, we show that under mild conditions on the rings, Zariski closedness of the support of Lyubeznik functor $\T$ can be transferred from the ring to its cyclically pure local subring. 

In section 4, we focus on the behaviour of the support for a special case of pure extension i.e. when the base ring is a direct summand of the extension ring. For flat and faithfully flat ring extension $R\rightarrow S$, for Lyubeznik functor $\T$, $\T(R)\otimes_R S= \T(S)$, but in general, we do not have any significant relation between them. In section 5, we continue to assume the ring extensions of section 4 and for such ring extensions, we compare the sets $\Supp_S(\T(R)\otimes_R S)$ and $\Supp_S \T(S)$. Important results are stated in Theorem 2.4, Corollary 1, Corollary 2, Corrolary 3, Theorem 3.4, Theorem 3.6, Theorem 3.8, Proposition 4.1, Theorem 4.2, Proposition 5.1 and finally in Proposition 5.2.

Throughout this paper, all the rings are assumed to be commutative Noetherian with unity. For basic results and for unexplained terms, we refer \cite{BH}, \cite{CAofM} and \cite{CRTofM}.
\section{behaviour of zariski closedness of the support under flat and faithfully flat ring extensions}

In this section, we observe the behaviour of Zariski closedness of the support of Lyubeznik functors and local cohomology modules under flat and faithfully flat ring extensions. For flat extension of Noetherian rings, Zariski closedness property of the support can be transferred from base ring to the extended ring, see (1) of Theorem 2.4. But, for faithfully flat extension we can also do the reverse, see Corollary 1. Actually, in (2) of Theorem 2.4, we get a slightly sharper result for the reverse and from which Corollary 1 follows. We also present a few applications of these results. 

\subsection{Main Results}

Consider flat ring homomorphism $R\rightarrow S$ of Noetherian rings where we reserve the symbols $\p$, $\p'$ for the elements of $\Spec R$ while $P$, $P'$ for the elements in $\Spec S$. Let $E$ be an $R$-module. For $\Ass_R E$, $\Ass_S (E\otimes_R S)$, and $\Ass_S (S/\p S)$, we denote the sets of minimal associated primes by $\min_R E$, $\min_S (E\otimes_R S)$, and $\min_S (S/\p S)$, 

\begin{lemma}
Let $R\rightarrow S$ be a flat ring homomorphism of Noetherian rings and let $\p_1$, $\p_2$ be the elements of $\Spec R$ such that $\p_1 S$ and $\p_2 S$ are proper ideals of $S$. Then the following are equivalent:\newline
(1) $\p_1=\p_2$\newline
(2) $\Ass_S(S/\p_1 S) = \Ass_S(S/\p_2 S)$\newline
(3) $\min_S(S/\p_1 S) = \min_S(S/\p_2 S)$ 
\end{lemma}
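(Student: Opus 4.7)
My plan is to establish the chain $(1)\Rightarrow(2)\Rightarrow(3)\Rightarrow(1)$. The first two implications are essentially formal: $(1)\Rightarrow(2)$ is immediate, since identifying $\p_1$ with $\p_2$ produces the same $S$-module $S/\p_i S$, and $(2)\Rightarrow(3)$ is automatic because $\min$ is by definition the subset of minimal elements of $\Ass$. So the real content of the lemma lies entirely in $(3)\Rightarrow(1)$.

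For $(3)\Rightarrow(1)$, I would first observe that $\min_S(S/\p_i S)$ coincides with the set of minimal primes of the ideal $\p_i S$ in $S$, i.e.\ the minimal elements of $\V(\p_i S)$. Since $\p_i S$ is proper by hypothesis, this set is non-empty for $i=1,2$, and the assumed equality $\min_S(S/\p_1 S)=\min_S(S/\p_2 S)$ supplies at least one prime $P\in \Spec S$ that is simultaneously minimal over $\p_1 S$ and over $\p_2 S$.

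The crux is then the following assertion: if $R\to S$ is flat and $P$ is minimal over $\p S$ for some $\p\in\Spec R$ with $\p S$ proper, then $P\cap R=\p$. Once this is in hand, applying it to both $\p_1$ and $\p_2$ with the common prime $P$ yields $\p_1 = P\cap R = \p_2$, which is (1). To prove the assertion, set $\p' = P\cap R$, so $\p\subseteq\p'$. Flat homomorphisms satisfy the going-down property, so there is $Q\in\Spec S$ with $Q\subseteq P$ and $Q\cap R = \p$; but then $\p S\subseteq Q\subseteq P$, and the minimality of $P$ over $\p S$ forces $Q=P$, whence $\p'=\p$.

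The main obstacle---really the only non-formal step---is the invocation of going-down for flat ring homomorphisms. The remainder of the argument is bookkeeping about the definitions of $\Ass$, $\min$, and $\V$, together with the observation that properness of $\p_i S$ guarantees a non-empty set of minimal primes to work with.
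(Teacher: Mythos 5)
Your proposal is correct and follows the same implication chain $(1)\Rightarrow(2)\Rightarrow(3)\Rightarrow(1)$ as the paper, with the real content concentrated in $(3)\Rightarrow(1)$ exactly as the paper has it. The small differences are in the details: your $(2)\Rightarrow(3)$ is the clean, one-line observation that minimal elements of a poset are determined by the poset, whereas the paper spells out the same idea via an explicit contradiction; and for $(3)\Rightarrow(1)$, the paper simply cites Matsumura (Theorem 12(i)), which says that \emph{every} $P\in\Ass_S(S/\p S)$ contracts to $\p$ when $R\to S$ is flat, while you reprove from scratch the special case you need (for $P$ minimal over $\p S$) via going-down. Both are legitimate; citing Matsumura is shorter and gives the stronger statement for free, while your going-down argument is self-contained and pinpoints exactly which property of flatness is being used. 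You also make explicit that the properness hypothesis on $\p_i S$ is what guarantees $\min_S(S/\p_i S)\neq\emptyset$, so there is a common prime $P$ to contract; the paper leaves this implicit but it is precisely why the hypothesis is there.
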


\begin{proof}
$(1)\Rightarrow (2)$ is easy.

For $(2)\Rightarrow (3)$, consider a minimal prime $P$ over $\p_1 S$ which is a minimal element of $\Ass_S(S/\p_1 S)$. We claim that it is also a minimal over $\p_2 S$ i.e. a minimal element of $\Ass_S(S/\p_2 S)$. If not, then there exists $P'$ such that $\p_2 S\subset P'\subset P$. Since $\Ass_S(S/\p_1 S) = \Ass_S(S/\p_2 S)$ and $P'\in\Ass_S(S/\p_1 S)$, this contradicts the fact that $P$ is minimal in $\Ass_S(S/\p_1 S)$. This implies that $\min_S(S/\p_1 S)\subset\min_S(S/\p_2 S)$ and by symmetry of the argument, we find that $\min_S(S/\p_1 S) = \min_S(S/\p_2 S)$. 

For $(3)\Rightarrow (1)$, take contraction of minimal prime. Since for any $P\in\Ass_S (S/\p S)$, $P\cap R= \p$ (see (i) of Theorem 12 of \cite{CAofM}), the result follows.
\end{proof}

In the following proposition, we extend the `Bourbaki formula' (see (ii) of Theorem 12 of \cite{CAofM}) for the minimal associated primes of flat extensions.

\begin{prop}
Let $R\rightarrow S$ be a flat ring homomorphism of Noetherian rings $R, S$ and $E$ be a $R$-module. Then, $\min_S (E\otimes_R S)= \bigcup_{\p\in \min_R E} \min_S (S/\p S)$.
\end{prop}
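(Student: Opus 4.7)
The plan is to bootstrap from the classical Bourbaki formula
$$\Ass_S(E\otimes_R S) = \bigcup_{\p\in \Ass_R E} \Ass_S(S/\p S),$$
using as the only extra ingredient the contraction identity $P\cap R=\p$ for every $P\in\Ass_S(S/\p S)$, both cited from Theorem~12 of \cite{CAofM}. All the work is then in matching the two layers of minimality -- one inside $\Ass_R E$ and one inside each $\Ass_S(S/\p S)$ -- with minimality inside $\Ass_S(E\otimes_R S)$.

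For the inclusion $\subset$, I would start with $P\in\min_S(E\otimes_R S)$ and use Bourbaki to place $P$ in $\Ass_S(S/\p S)$ for some $\p\in\Ass_R E$; two things then need to be checked. First, $P$ is already minimal inside $\Ass_S(S/\p S)$, because any strictly smaller element of $\Ass_S(S/\p S)$ would, again by Bourbaki, be a strictly smaller element of $\Ass_S(E\otimes_R S)$, contradicting the choice of $P$. Second, $\p$ is minimal inside $\Ass_R E$: if there existed $\p'\subsetneq \p$ with $\p'\in\Ass_R E$, I would pick a minimal prime $P'$ of $\p'S$ sitting inside $P$ (which exists since $\p'S\subset\p S\subset P$); then $P'\in\Ass_S(S/\p'S)\subset\Ass_S(E\otimes_R S)$, and the contraction identity forces $P'\cap R=\p'\neq \p=P\cap R$, so $P'\subsetneq P$, again contradicting the minimality of $P$.

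For the reverse inclusion, I would take $\p\in\min_R E$ and $P\in\min_S(S/\p S)$, note that $P\in\Ass_S(E\otimes_R S)$ by Bourbaki, and suppose for contradiction that some $P'\subsetneq P$ also lies in $\Ass_S(E\otimes_R S)$. Bourbaki then places $P'$ in $\Ass_S(S/\q S)$ for some $\q\in\Ass_R E$, and contracting gives $\q = P'\cap R\subset P\cap R = \p$. The case $\q\subsetneq \p$ is excluded by $\p\in\min_R E$, and the case $\q=\p$ gives $P'\in\Ass_S(S/\p S)$ with $P'\subsetneq P$, which is excluded by $P\in\min_S(S/\p S)$. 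This closes the argument.

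The only genuinely delicate point is securing the \emph{strict} containment $P'\subsetneq P$ when descending through the $R$-layer in the first inclusion; this is exactly where the contraction identity is indispensable, and it is what prevents the proposition from following purely formally from the $\Ass$-level Bourbaki formula.
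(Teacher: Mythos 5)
Your proposal is correct and follows essentially the same route as the paper: bootstrap from the Bourbaki $\Ass$-formula plus the contraction identity $P\cap R=\p$ for $P\in\Ass_S(S/\p S)$, then carefully match the two layers of minimality. You streamline one step slightly -- where the paper invokes going-down to produce the prime $P'\subsetneq P$ lying over $\p'$, you simply take a minimal prime of $\p'S$ inside $P$ and let the contraction identity force $P'\cap R=\p'\neq\p$, hence strictness -- but the decomposition and key inputs are identical.
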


\begin{proof}
Since $R\rightarrow S$ is a flat ring homomorphism of Noetherian rings, from (ii) of Theorem 12 of \cite{CAofM}, we find that $\Ass_{S}(E\otimes_{R} S)= \cup_{\p\in \Ass_{R}E} \Ass_S(S/\p S)$. Let $P\in \min_{S}(E\otimes_{R} S)$. This implies that $P\in \Ass_S (S/\p S)$. If there exists some $P'\subset P$ and $P'\in \Ass_S (S/\p S)$, then $P'\in \Ass_{S}(E\otimes_{R} S)$ and this contradicts the fact that $P\in \min_{S}(E\otimes_{R} S)$. Thus, $P\in \min_{S}(E\otimes_{R} S)$ implies $P\in \min_S (S/\p S)$ for some $\p\in \Ass_R E$. We claim that $\p\in \min_R E$. If not, then there exists $\p'\subset \p$ such that $\p'\in \Ass_R E$ and we already have $P\in \min_S(S/\p S)$ with $P\cap R= \p$. By going down theorem, there exists $P'\subset P$, with $P'\cap R= \p'$. Thus, we have $\p' S\subset P'$. We can take $P'$ as the minimal prime over $\p'S$ and thus $P'\in \Ass_S(S/\p' S)$ (in fact $P'\in \min_S(S/\p' S)$. But, this gives that $P, P'\in \Ass_{S}(E\otimes_{R} S)$ and thus $P$ is not a minimal prime. Thus, we obtain that $\min_S (E\otimes_R S)\subset\bigcup_{\p\in \min_R E} \min_S (S/\p S)$.

For the other inclusion, consider $\p\in\min_R E$ and let $P\in \min_S(S/\p S)$. This implies $P\in\Ass_S (E\otimes_R S)$. Assume, there is some $P'\subset P$ and $P'\in\Ass_S (E\otimes_R S)$. Then $P'\in\Ass_S (S/\p' S)$ for some $\p'\in \Ass_R E$ and we can choose $P'$ in the $\min_S (S/\p' S)$. Thus, we have $\p= P\cap R\supseteq P'\cap R= \p'$. The last equality is due to an equivalent statement of going down theorem, see (\cite{CAofM}, page 31-32). But, if $\p=\p'$, then from above lemma we get, $\Ass_S(S/\p S) = \Ass_S(S/\p' S)$ and $P'\in \min_S(S/\p S)$, which contradicts the fact that $P\in \min_S(S/\p S)$. Further $\p'\subset \p$ implies that $\p$ is not in $\min_R E$, which is again a contradiction. Thus $P\in\min_S (E\otimes_R S)$ and we conclude for the other inclusion. So $\min_S (E\otimes_R S)= \bigcup_{\p\in \min_R E} \min_S (S/\p S)$.
\end{proof}

We state the following well known result. For the sake of completeness we present the proof.

\begin{lemma}
Let $R$ be a Noetherian ring and $M$ be an arbitrary $R$-module. Then, $\min_R M$ is finite if and only if $\Supp_R M$ is a Zariski closed set of $\Spec R$.
\end{lemma}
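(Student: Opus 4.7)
The plan is to identify $\min_R M$ with the set of minimal elements (under inclusion) of $\Supp_R M$, after which both implications follow quickly from standard facts about specialization-closed subsets of $\Spec R$.

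For the key identification: given any $\p \in \Supp_R M$, the localization $M_{\p}$ is non-zero, so since $R_{\p}$ is Noetherian, $\Ass_{R_{\p}} M_{\p}$ is non-empty; every associated prime there has the form $\q R_{\p}$ for some $\q \in \Ass_R M$ with $\q \subseteq \p$ (standard compatibility of associated primes with localization over a Noetherian base). If $\p$ is minimal in $\Supp_R M$, then such a $\q$ must equal $\p$, placing $\p \in \Ass_R M$; minimality in the larger set $\Supp_R M$ then automatically gives minimality in the smaller set $\Ass_R M$. Conversely, if $\p \in \min_R M$ and some $\q \subsetneq \p$ lies in $\Supp_R M$, the same localization argument produces an element of $\Ass_R M$ strictly below $\p$, contradicting minimality of $\p$ in $\Ass_R M$.

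Given this identification, both implications are formal. If $\min_R M = \{\p_1, \dots, \p_n\}$ is finite, I would show that $\Supp_R M = \V(\p_1) \cup \cdots \cup \V(\p_n)$: the inclusion $\supseteq$ is immediate since $\Supp_R M$ contains each $\p_i$ and is closed under specialization, and the reverse inclusion follows because every prime in $\Supp_R M$ contains some minimal element of $\Supp_R M$ (chains of primes below a given prime in a Noetherian ring have bounded length by Krull's height theorem), which by the identification must be one of the $\p_i$. Conversely, if $\Supp_R M = \V(I)$ is Zariski closed, its minimal elements are precisely the minimal primes over $I$, a finite set by Noetherianity of $R$, and these coincide with $\min_R M$.

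I expect the only genuine subtlety to be the first step: the matching of the two sets of minimal primes, which hinges on how associated primes behave under localization in the Noetherian setting. The remaining content is formal manipulation with the Zariski topology on $\Spec R$ and Krull's height theorem.
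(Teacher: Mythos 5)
Your proof is correct and takes essentially the same route as the paper's: both ultimately reduce to showing $\Supp_R M = \V\bigl(\bigcap_{i}\p_i\bigr)$ where $\{\p_1,\dots,\p_n\}=\min_R M$, using that $\Ass$ commutes with localization over a Noetherian ring and that support is specialization-closed. You make explicit the identification of $\min_R M$ with the set of minimal elements of $\Supp_R M$, which the paper leaves implicit (asserting $\Supp_R M\subset \V(I)$ without proof and calling the ``if'' direction straightforward), so yours is a tidier write-up of the same argument rather than a genuinely different one.
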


\begin{proof}
`if' is straight forward. For `only if', let $\p_1,\ldots,\p_n$ are $\min_R M$ and $I=\cap^n_{i=1} \p_i$. We first note that $\Supp_R M\subset \V(I)$. We prove the other inclusion to conclude. Clearly, for any $\p$, $\p\supseteq I$ implies $\p\supseteq \p_i$ for some $\p_i$. Assume that for some $\p\supset I$, such that $\p\supseteq \p_i$, let $M_p=0$. But, $0\neq M_{\p_i}= M\otimes_R R_{\p_i}= M\otimes_R (R_p)_{\p_iR_{\p_i}}= M\otimes_R R_{\p}\otimes_{R_{\p_i}}(R_p)_{\p_iR_{\p_i}}= M_{\p}\otimes_{R_{\p_i}}(R_p)_{\p_iR_{\p_i}}=0$, since $M_{\p}=0$. This is a contradiction. Thus $\Supp_R M= \V(I)$ and we conclude.  
\end{proof}

\begin{rem}
From Lemma 2.3 it is clear support of a local cohomology module is Zariski closed if we have the finiteness property of its set of associated primes.
\end{rem}

\begin{thm}
Suppose $R\rightarrow S$ be a flat ring homomorphism of Noetherian rings. \newline
(1) For $R$-module $E$, if $\Supp_R E$ is a Zariski closed subset in $\Spec R$ then $\Supp_S (E\otimes_R S)$ is a Zariski closed subset in $\Spec S$.\newline 
Consider an $R$-module $M$. For ideal $I\subset R$ and for $i\geq 0$, $\Supp_{R} H^i_I(M)$ is a Zariski closed subset of $\Spec R$ implies that $\Supp_{S} H^i_{IS}(M\otimes_R S)$ is a Zariski closed subset of $\Spec S$.\newline 
In particular, for Lyubeznik functor $\T$, if $\Supp_{R} \T(R)$ is a Zariski closed subset in $\Spec R$ then $\Supp_{S} \T(S)$ is a Zariski closed subset in $\Spec S$.\newline
(2) Converse will be true if in addition to we assume that only for finitely many primes $\p\in \min_{R}E$, $\p S$ is not a proper ideal of $S$. For local cohomology module and for Lyubeznik functor, converse will be true if in addition to we assume that only for finitely many primes $\p\in \min_{R} H^i_I(M)$ and $\p\in \min_{R} \T(R)$, $\p S$ is not a proper ideal of $S$.

\end{thm}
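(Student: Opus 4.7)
My plan is to deduce both directions from Lemma~2.3 (support Zariski closed if and only if the set of minimal primes is finite) together with the Bourbaki-type decomposition of Proposition~2.2, with Lemma~2.1 supplying the disjointness needed for the converse. For the local cohomology and Lyubeznik statements I would then invoke the standard flat base change isomorphisms $H^{i}_{I}(M)\otimes_{R}S\cong H^{i}_{IS}(M\otimes_{R}S)$ and $\T(R)\otimes_{R}S\cong\T(S)$, applying the module version to $E=H^{i}_{I}(M)$ or $E=\T(R)$; in this way I never need to argue directly about local cohomology beyond one black-box reduction.

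For part~(1), assuming $\Supp_{R}E$ is Zariski closed, Lemma~2.3 gives that $\min_{R}E$ is finite. Proposition~2.2 then expresses $\min_{S}(E\otimes_{R}S)$ as the union $\bigcup_{\p\in\min_{R}E}\min_{S}(S/\p S)$ of finitely many sets, each of which is finite since $S$ is Noetherian. Hence $\min_{S}(E\otimes_{R}S)$ is finite, and Lemma~2.3 applied over $S$ concludes.

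For part~(2), I would run the same decomposition in reverse. Partition $\min_{R}E$ as $A\sqcup B$, where $A=\{\p\in\min_{R}E:\p S=S\}$ (finite by the hypothesis) and $B=\{\p\in\min_{R}E:\p S\subsetneq S\}$. For $\p\in A$ the contribution $\min_{S}(S/\p S)$ is empty, while for $\p\in B$ it is nonempty because $S/\p S$ is a nonzero Noetherian ring. Moreover, if $P\in\min_{S}(S/\p_{1} S)\cap\min_{S}(S/\p_{2} S)$ for $\p_{1},\p_{2}\in B$, then by (i) of Theorem~12 of \cite{CAofM} one gets $\p_{1}=P\cap R=\p_{2}$, so the sets $\min_{S}(S/\p S)$ with $\p\in B$ are pairwise disjoint. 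Proposition~2.2 therefore presents $\min_{S}(E\otimes_{R}S)$ as a disjoint union of nonempty sets indexed by $B$. Assuming $\Supp_{S}(E\otimes_{R}S)$ is Zariski closed, Lemma~2.3 forces this union to be finite; hence $B$ is finite, $\min_{R}E=A\cup B$ is finite, and Lemma~2.3 once again yields Zariski closedness of $\Supp_{R}E$.

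The main obstacle is the bookkeeping in the converse: one must recognize that the ``$\p S$ proper'' hypothesis precisely excludes those $\p$ whose contribution $\min_{S}(S/\p S)$ would be empty (and hence invisible to any finiteness statement downstairs), and one must invoke disjointness, which is essentially a restatement of Lemma~2.1, to convert finiteness of the union into finiteness of its index set. Once this is in place, the local cohomology and Lyubeznik extensions are purely formal via the flat base change isomorphisms above.
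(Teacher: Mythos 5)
Your proof is correct and part (1) matches the paper essentially verbatim, but your converse argument takes a genuinely different and in fact sharper route than the paper's. Both of you start from the decomposition $\min_S(E\otimes_R S)=\bigcup_{\p\in\min_R E}\min_S(S/\p S)$ of Proposition~2.2, both handle the finitely many $\p$ with $\p S=S$ by removing them, and both observe that the remaining $\min_S(S/\p S)$ are nonempty. The divergence is in how finiteness of the union forces finiteness of the index set. The paper invokes Lemma~2.1 only to conclude that distinct $\p$ yield \emph{distinct} sets $\min_S(S/\p S)$, and then bounds the index set by the number $2^n$ of subsets of the finite union. You instead observe directly from the contraction identity $P\cap R=\p$ for $P\in\Ass_S(S/\p S)$ (Theorem~12(i) of \cite{CAofM}) that the sets $\min_S(S/\p S)$ are \emph{pairwise disjoint}, so the union being finite gives the stronger bound $|B|\le|\min_S(E\otimes_R S)|$. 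Your disjointness fact is strictly stronger than the distinctness the paper extracts from Lemma~2.1, renders that lemma unnecessary for this step, and replaces the power-set count by a one-line cardinality comparison. Both arguments are valid; yours is cleaner. On the formal reductions, you make explicit the flat base change isomorphism $H^i_I(M)\otimes_R S\cong H^i_{IS}(M\otimes_R S)$, which the paper leaves implicit when it says ``set $E=H^i_I(M)$''; this is a minor expositional difference, not a mathematical one.
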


\begin{proof}
From Lemma 2.3, we can only focus on the finiteness property of the set of minimal associated primes. 

(1) For the first assertion, from Proposition 2.2 above, we get $\min_{S}(E\otimes_{R} S)= \cup_{\p\in \min_{R}E} \min_S(S/\p S)$. This gives that $\min_{S}(E\otimes_{R} S)$ is finite set if $\min_{R} E$ is a finite set, since $\min_S(S/\p S)$ is always finite. For second assertion, we set $E= H^i_I(M)$ in the above formula. For third assertion, set $E= \T(R)$ and from Lemma 3.1 of \cite{Ly1} we know that $\T(S)= \T(R)\otimes_R S$. Thus the result follows.

(2) For the converse, at first we assume that for every $\p\in \min_{R} E$, $\p S$ is a proper ideal of $S$. We observe the following: (1) For every $\p$ in $\Spec R$, $\p S$ is a proper ideal of $S$ if and only if $S/\p S$ is a non-zero $S$-module. Equivalently, this means that for any $\p\in \Spec R$, $\Ass_S(S/\p S)\neq \phi$ and equivalently $\min_S(S/\p S)\neq \phi$. (2) From above lemma we get that, $\min_S(S/\p_1 S) = \min_S(S/\p_2 S)$ if and only if $\p_1=\p_2$. (3) Consider a collection of finite sets $\{G_i\}_{i\in\Gamma}$ such that $G_i\neq\phi$ for every $i\in\Gamma$ and for every pair of $i,j\in \Gamma$, $i=j$ if and only if $G_i =G_j$. Then, finiteness of $G=\cup_{i\in\Gamma}G_i$ implies that $\Gamma$ is finite. This is because for finite $G$ with $|G|=n$ (say), there can be at the most $2^n$ number of distinct subsets of $G$. This gives $\Gamma$ is finite.

Now consider the following equation from the first assertion: $\min_{S}(E\otimes_{R} S)= \cup_{\p\in \min_{R}E} \min_S(S/\p S)$. From hypothesis we get that for every $\p$ in $\min_{R} E$, $\min_S(S/\p S)\neq\phi$. Thus, from above observations we find that if $\min_S (E\otimes_{R} S)$ is finite, then $\min_{R} E$ is finite. For second assertion, we set $E= H^i_I(M)$ and for third assertion set $E= \T(R)$

In general, for any $R$-module $E$, we can write $\min_{R} E= S_1 \cup S_2$, where the prime ideals of $S_1$ expand in $S$ as proper ideals and those of $S_2$ expand in $S$ as the whole ring. Since $S_2$ is finite, the result follows from above paragraph. 
\end{proof}

\begin{cor}
Let $S$ be faithfully flat over $R$.\newline
(1) For $R$-module $E$, $\Supp_S (E\otimes_{R} S)$ is Zariski closed subset in $\Spec S$ if and only if $\Supp_R E$ is Zariski closed subset in $\Spec R$.\newline
(2) In particular, for ideal $I\subset R$ and for $i\geq 0$, $\Supp_{S} H^i_{IS}(S)$ is a Zariski closed subset of $\Spec S$ if and only if $\Supp_{R} H^i_I(R)$ is a Zariski Closed subset of $\Spec R$.\newline
(3) More generally, for Lyubeznik funtor $\T$, $\Supp_{S} \T(S)$ is a Zariski closed subset of $\Spec S$ if and only if $\Supp_{R} \T(R)$ is a Zariski closed subset of $\Spec R$.\newline
\end{cor}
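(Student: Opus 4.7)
The plan is to deduce Corollary~1 directly from Theorem~2.4, observing that faithful flatness of $R\to S$ forces the exceptional set appearing in part~(2) of that theorem to be empty. The forward implication in all three parts is already contained in part~(1) of Theorem~2.4, since faithful flatness implies flatness; so the whole content of the corollary lies in the converse.

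For the converse, I would invoke the standard characterization of faithful flatness: $R\to S$ is faithfully flat if and only if $\ideal{p}S$ is a proper ideal of $S$ for every $\ideal{p}\in\Spec R$ (equivalently, $S/\ideal{p}S\neq 0$). Thus the collection of $\ideal{p}\in\min_R E$ (respectively $\min_R H^i_I(R)$, $\min_R \T(R)$) for which $\ideal{p}S$ fails to be proper is empty, hence trivially finite. Applying part~(2) of Theorem~2.4 with $E$ replaced in turn by an arbitrary $R$-module, by $H^i_I(R)$, and by $\T(R)$, gives the converse in parts~(1), (2), and~(3) respectively.

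It remains to identify the tensor products with the objects appearing in the statement. For part~(2) this is flat base change for local cohomology, which under flatness of $S/R$ yields $H^i_I(R)\otimes_R S\cong H^i_{IS}(S)$; for part~(3) this is exactly the content of Lemma~3.1 of \cite{Ly1} (already cited in the proof of Theorem~2.4), giving $\T(R)\otimes_R S\cong \T(S)$. With these identifications, Zariski closedness of $\Supp_S(E\otimes_R S)$ translates verbatim into Zariski closedness of $\Supp_S H^i_{IS}(S)$ and $\Supp_S \T(S)$.

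No step here is a real obstacle; the only point that needs care is the observation that faithful flatness supplies the hypothesis ``$\ideal{p}S$ is proper for all $\ideal{p}$,'' which is precisely what is missing in the merely flat case and is what allows the converse direction to go through. In effect, Corollary~1 is the natural specialization of Theorem~2.4 to the faithfully flat setting, and its proof amounts to quoting Theorem~2.4 together with this standard characterization.
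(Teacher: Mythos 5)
Your proposal is correct and follows essentially the same route as the paper: it deduces the corollary from Theorem 2.4 by noting that faithful flatness forces $\p S$ to be proper for every $\p\in\Spec R$, so the finiteness hypothesis in part (2) of that theorem is vacuously satisfied, and it uses Lemma 3.1 of \cite{Ly1} for the identification $\T(S)=\T(R)\otimes_R S$. You spell out the flat base change step for local cohomology a bit more explicitly than the paper, but the underlying argument is identical.
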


\begin{proof}
If $S$ is faithfully flat over $R$, then for every $\p$ in $\Spec R$, $\p S$ is a proper ideal of $S$. Thus, (1) and (2) follows immediately from above theorem.
For (3), we observe that $\T(S)= \T(R)\otimes_R S$ (see Lemma 3.1 of \cite{Ly1}), and result follows from (1).
\end{proof}

\begin{example}
There are many situations where we can apply above Corollary. Here we mention few of them.

(1) For any ring $R$, consider the polynomial ring $S=R[x_1,\dots, x_n]$ over $R$. From \cite{HKM}, we find for every $i\geq 0$ and for every $I\subset R$, $\Supp_{R} H^i_I(R)$ is a Zariski Closed subset of $\Spec R$ when $R$ is local and of dimension at the most four. Thus above Corollary implies that $\Supp_{S} H^i_{J}(S)$ is a Zariski Closed subset of $\Spec S$ for every expanded ideal os $S$ and where dimension of $S$ is $n+4$. 

(2) In invariant theory there are many situations where for certain subgroup $G$ of group of automorphisms of $R$, $R^G\rightarrow R$ is a faithfully flat ring extension. As for example, consider a finite subgroup $G$ of group of automorphism of $R$ such that group action is Galois (see \cite{AG} for the definition of Galois group action). Since $R$ is finite projective $R^G$-module, $R$ is faithfully flat over $R^G$. Now, if for every $i\geq 0$ and for every $J\subset R$, $\Supp_{R} H^i_J(R)$ is a Zariski Closed subset of $\Spec R$ (as for example, this can happen when $R$ is regular), then from above Corollary we get that, for every $i\geq 0$ and for every $I\subset R^G$, $\Supp_{R^G} H^i_I(R^G)$ is a Zariski Closed subset of $\Spec R^G$. 

In particular, for regular ring, the above situation is realizable in (a) of Proposition 4.2 in \cite{LP}.  
\end{example}

For local cohomology modules we have the following lemma. The results are well known result, but for the sake of completeness we present the proof.

\begin{lemma}
Consider an $R$-module $M$ and let $I$ be an ideal of Noetherian ring $R$. For $i\geq 0$, let $H^i_{I}(M)$ be its $i$-th local cohomology module. Then, $\Ass_{R}H^i_{I}(M)\subset \Supp_{R}H^i_{I}(M)\subset V(I)$. 
\end{lemma}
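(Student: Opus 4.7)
The proof splits naturally along the two claimed inclusions, and both reduce to textbook observations; my plan is simply to verify each in turn.

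For the first inclusion $\Ass_R H^i_I(M) \subset \Supp_R H^i_I(M)$, I would invoke the general fact that associated primes always lie in the support. Given $\p \in \Ass_R H^i_I(M)$, write $\p = \Ann_R(x)$ for some $x \in H^i_I(M)$. If $x/1$ were to vanish in $H^i_I(M)_\p$, there would exist $s \in R \setminus \p$ with $sx = 0$, forcing $s \in \Ann_R(x) = \p$, a contradiction. Hence $H^i_I(M)_\p \neq 0$ and $\p \in \Supp_R H^i_I(M)$.

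For the second inclusion $\Supp_R H^i_I(M) \subset \V(I)$, the key input is that $H^i_I(M)$ is $I$-torsion. This follows from the standard identification $H^i_I(M) = \varinjlim_n \ffunc{Ext}^i_R(R/I^n, M)$, since each $\ffunc{Ext}^i_R(R/I^n, M)$ is killed by $I^n$. Now take $\p \notin \V(I)$ and choose $s \in I \setminus \p$. For any $x \in H^i_I(M)$ there exists $n$ with $I^n x = 0$, so in particular $s^n x = 0$, and therefore $x/1 = 0$ in the localization $H^i_I(M)_\p$. Thus $H^i_I(M)_\p = 0$ and $\p \notin \Supp_R H^i_I(M)$.

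There is no substantive obstacle in this argument—each step is a standard manipulation, and indeed the statement is acknowledged by the author as well known. The only point that requires naming a specific tool is the $I$-torsion property of $H^i_I(M)$, for which the $\varinjlim \ffunc{Ext}$ presentation (or, equivalently, the description of $H^i_I$ as the right derived functor of the $I$-torsion functor $\Gamma_I$) supplies the needed justification.
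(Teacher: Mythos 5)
Your proof is correct and follows essentially the same route as the paper: both reduce the second inclusion to the $I$-torsion property of $H^i_I(M)$, pick an element of $I$ outside the prime $\p$, and observe that it annihilates each element of the localization, forcing $H^i_I(M)_\p = 0$ (you argue directly, the paper argues by contrapositive, but these are logically identical). The only additions you make are spelling out the first inclusion, which the paper takes as immediate, and explicitly citing the $\varinjlim \ffunc{Ext}$ presentation to justify torsion, which the paper states without reference.
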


\begin{proof}
Since $\Ass_{R}H^i_{I}(M)\subset \Supp_{R}H^i_{I}(M)$, it is sufficient to show $\Supp_{R}H^i_{I}(M)\subset V(I)$. For $\p\in \Spec R$, assume that $(H^i_I (M))_p\neq 0$, then $\p$ should contain $I$, otherwise, we get some element $x\in I$ outside $\p$ which is unit in $R_{\p}$. Since, every element of $H^i_I (M)$ is annihilated by some power of $I$, every element of $(H^i_I (M))_p$ is annihilated by some power of $x$. This implies that $(H^i_I (M))_p = 0$. Thus $\Ass_{R}H^i_{I}(M)\subset \Supp_{R}H^i_{I}(M)\subset V(I)$. 
\end{proof}

When we apply the above results of flatness to localization, we can apply them to local cohomologies of any module over the ring. More precisely, for a Noetherian ring $R$, let $M$ be a finitely generated $R$-module. Consider the multiplicatively closed set $W$ of $R$. Due to flatness, Zariski closedness of the support of local cohomology modules is transferred from $R$ to $R_W$. But, using Lemma 2.5 above, we can have partial results that when we can transfer the Zariski closedness of the support from $R_W$ to $R$, see (2) and (3) of proposition below. 

\begin{prop}
Let $R$ be a Noetherian ring and $M$ be an $R$-module. Consider the multiplicatively closed set $W$ of $R$. Then we have the following:\newline
(1) For every $i\geq 0$ and for every ideal $I\subset R$, $\Supp_{R} H^i_I(M)$ is a Zariski closed subset of $\Spec R$ implies that $\Supp_{R_W} H^i_{IR_W}(M_W)$ is a Zariski closed subset of $\Spec R_W$.\newline 
In particular, if for every $i\geq 0$ and for every ideal $I\subset R$, $\Supp_{R} H^i_I(M)$ is a Zariski closed subset of $\Spec R$ then for every $i\geq 0$ and for every ideal $J\subset R_W$, $\Supp_{R_W} H^i_J(M_W)$ is a Zariski closed subset of $\Spec R_W$.\newline
(2) For $i\geq 0$, let only finite number of primes of $\min_RH^i_I(M)$ intersect with $W$. If $\Supp_{R_W} H^i_{IR_W}(M_W)$ is a Zariski closed subset of $\Spec R_W$ then so is $\Supp_RH^i_I(M)$ in $\Spec R$.\newline
In particular, for every $i\geq 0$, if only finite number of primes of $\V(I)$ intersects with $W$, then Zariski closedness of $\Supp_{R_W}H^i_{IR_W}(M_W)$ implies the Zariski closedness of $\Supp_R H^i_I(M)$.\newline
(3) Let $W=\{a^n: a\in R, n\in {\ZZ}_+\}$. For $i\geq 0$, let $\Ass_RH^i_I(M)$ be contained in the open subset $D(a)=\Spec R-V(a)$. If $\Supp_{R_W} H^i_{IR_W}(M_W)$ is a Zariski closed subset of $\Spec R_W$ then so is $\Supp_RH^i_I(M)$ in $\Spec R$.\newline
In particular, for every $i\geq 0$, if $\V(I +(a)R)$ is finite, then Zariski closedness of $\Supp_{R_W}H^i_{IR_W}(M_W)$ implies the Zariski closedness of $\Supp_R H^i_I(M)$.
\end{prop}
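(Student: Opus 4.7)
The plan is to reduce each part to the results just proved. Localization is flat and local cohomology commutes with flat base change, so $H^i_I(M) \otimes_R R_W = H^i_{IR_W}(M_W)$. Hence part (1) is immediate from Theorem 2.4(1) applied to $E = H^i_I(M)$ with $S = R_W$; the ``in particular'' clause follows because every ideal $J \subset R_W$ has the form $(J \cap R)R_W$, and $M_W$ is itself an $R_W$-module so it is unchanged under further localization.

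For part (2), Lemma 2.3 reduces the claim to showing that $\min_R H^i_I(M)$ is finite. I apply Proposition 2.2 to $E = H^i_I(M)$ with $S = R_W$:
$$\min_{R_W}\bigl(H^i_I(M) \otimes_R R_W\bigr) \;=\; \bigcup_{\p \in \min_R H^i_I(M)} \min_{R_W}(R_W/\p R_W).$$
When $\p \cap W = \emptyset$ the ideal $\p R_W$ is itself prime, so $\min_{R_W}(R_W/\p R_W) = \{\p R_W\}$; when $\p \cap W \neq \emptyset$ one has $\p R_W = R_W$, so the set is empty. Since $\p \mapsto \p R_W$ is injective on primes disjoint from $W$, the left-hand side is in bijection with the subset of $\min_R H^i_I(M)$ consisting of primes that miss $W$. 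By Lemma 2.3, Zariski closedness of $\Supp_{R_W} H^i_{IR_W}(M_W)$ gives that this subset is finite; adding the finitely many primes of $\min_R H^i_I(M)$ that meet $W$ (by hypothesis) yields $\min_R H^i_I(M)$ finite. The ``in particular'' clause is then immediate from the inclusion $\min_R H^i_I(M) \subset \Supp_R H^i_I(M) \subset V(I)$ provided by Lemma 2.5.

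For part (3), with $W = \{a^n : n \in \ZZ_+\}$, a prime $\p$ meets $W$ if and only if $a \in \p$, i.e.\ if and only if $\p \notin D(a)$. The hypothesis $\Ass_R H^i_I(M) \subset D(a)$ forces $\min_R H^i_I(M) \subset \Ass_R H^i_I(M) \subset D(a)$, so \emph{no} prime of $\min_R H^i_I(M)$ meets $W$, and part (2) applies. For the ``in particular'': Lemma 2.5 gives $\min_R H^i_I(M) \cap V(a) \subset V(I) \cap V(a) = V(I + (a)R)$, which is finite by assumption, so only finitely many primes of $\min_R H^i_I(M)$ meet $W$, and part (2) applies again.

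No serious obstacle arises; the main ingredients --- the identification $(H^i_I(M))_W = H^i_{IR_W}(M_W)$, Theorem 2.4, Proposition 2.2, Lemma 2.5, and the standard bijection between primes of $R$ disjoint from $W$ and primes of $R_W$ --- assemble directly to give the three assertions. The one point that is worth being careful with is the dichotomy inside the union in Proposition 2.2: primes meeting $W$ drop out completely, while those missing $W$ each contribute exactly one prime, which is precisely what makes the reverse implication in (2) work once the ``bad'' primes are controlled.
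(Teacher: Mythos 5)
Your proof is correct and follows essentially the same route as the paper's: part (1) from flat base change and Theorem 2.4(1), part (2) from the Bourbaki-type formula of Proposition 2.2 together with Lemma 2.3 and the dichotomy on whether $\p$ meets $W$, and part (3) by reduction to part (2) via Lemma 2.5. The one mild difference is that the paper's proof of (2) first disposes of the case $I \cap W \neq \emptyset$ separately before assuming $I \cap W = \emptyset$, whereas your argument absorbs both cases uniformly into the observation that primes meeting $W$ contribute nothing to the union; this is a small streamlining, not a different method.
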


\begin{proof}
(1) First assertion follows from above Proposition. Second assertion is immediate, since every ideal in $R_W$ is an extended ideal. 

(2) Suppose, $I\cap W\neq\phi$. Lemma 2.5 implies that, every elements of $\Ass_{R}H^i_{I}(M)$ contains $I$ and here it also intersects with $W$. This implies, $\Ass_{R}H^i_{I}(M)$ is finite, since from hypothesis we have only finite number of primes which can intersect $W$. This implies that in this case $\Supp_R H^i_{I}(M)$ is Zariski closed, see Lemma 2.3 above. Thus, we can assume $I\cap W=\phi$. Moreover, using Lemma 2.3 again, we can focus only on the set of minimal associated primes. Set $J= IR_W$. From previous result in (1) we get, $\min_{R_W}(H^i_J(M_W))= \min_{R_W}(F\otimes_{R} R_W)= \cup_{\q\in \min_{R}F} \min_{R_W}(R_W/\q R_W)$, where $F= H^i_I(M)$. Here both $\min_{R_W}(H^i_J(M_W))$ and $\min_{R_W}(R_W/\q R_W)$ are finite sets. Observe that for all but finitely many $\q\in \min_{R}F$, $\Ass_{R_W}(R_W/\q R_W)=\{\q R_W\}\neq \phi$, since for all but finitely many $\q\in \min_{R}F$ does not intersect with $W$ and $\q R_W$ is a prime in $R_W$. Moreover, for $\q_1, \q_2 \in \min_{R}F$, $\q_1= \q_2$ if and only if $\min_{R_W}(R_W/\q_1 R_W)= \min_{R_W}(R_W/\q_2 R_W)$. Thus we can conclude that $\min_{R}F$ is finite. 

From Lemma 2.5 the second assertion is immediate.

(3) Both the assertions are immediate from (2) above.
\end{proof}

When $M$ is $R$, for more comprehensive situation than that of above Proposition 2.6, see Corollary 3.

\begin{rem}
From Lemma 2.5 above, we already know that $\Ass_{R}H^i_{I}(R)\subset\V(I)$. For $W=\{a^n: a\in R, n\in {\ZZ}_+\}$, one of the way the above Proposition can be realizable is to take $\V(I)\cap \V((a))$ is finite. Since, $\V(I)\cap \V((a))=\V(I+(a))$ we can give two examples of our desired situations:

(1) $\rad (I+(a))$ is the Jacobson radical of a semi-local ring,

(2) $(I+(a))$ is the whole ring.
\end{rem}

\subsection{Applications}

We recall the definition of regular algebra, see (\cite{CAofM}, page 249). Here we call them as smooth algebra. We observe the following for the smooth algebra.

\begin{lemma}
If $B$ is a smooth $A$-algebra, then for any multiplicatively closed set $W$ of $A$, $B_W$ is a smooth $A_W$-algebra.
\end{lemma}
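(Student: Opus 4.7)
The plan is to verify directly the two defining properties of a smooth (that is, regular) ring homomorphism for the map $A_W \to B_W$: namely, flatness, and that every fiber $B_W \otimes_{A_W} k(\mathfrak{q})$ is geometrically regular over $k(\mathfrak{q})$ for each $\mathfrak{q} \in \Spec A_W$.

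First I would dispense with flatness. Since $B_W$ is obtained from $B$ by base change along $A \to A_W$, we have $B_W \cong B \otimes_A A_W$ as $A_W$-algebras. Flatness is preserved by base change, so the hypothesis that $A \to B$ is flat immediately gives flatness of $A_W \to B_W$.

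Next I would handle the fiber condition, which is the only substantive step. Every prime $\mathfrak{q}$ of $A_W$ is of the form $\mathfrak{p} A_W$ for a unique $\mathfrak{p} \in \Spec A$ with $\mathfrak{p} \cap W = \emptyset$, and under this identification the residue field $k(\mathfrak{p} A_W)$ coincides canonically with $k(\mathfrak{p})$. Using the tensor product identity together with the isomorphism $B_W \cong B \otimes_A A_W$, one obtains
$$B_W \otimes_{A_W} k(\mathfrak{p} A_W) \;\cong\; (B \otimes_A A_W) \otimes_{A_W} k(\mathfrak{p}) \;\cong\; B \otimes_A k(\mathfrak{p}).$$
The right-hand side is geometrically regular over $k(\mathfrak{p})$ by the smoothness of $A \to B$, and this is exactly the fiber condition required at $\mathfrak{q}$ for smoothness of $A_W \to B_W$.

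I don't anticipate any genuine obstacle here: the argument is a direct verification that reduces both conditions (flatness and geometric regularity of fibers) to the corresponding conditions for $A \to B$ via the base change $A \to A_W$ and the canonical bijection between $\Spec A_W$ and the primes of $A$ disjoint from $W$. Any mild subtlety lies only in noting that residue fields of primes of $A_W$ match those of the corresponding primes of $A$, which is standard.
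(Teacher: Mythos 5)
Your proposal is correct and follows essentially the same route as the paper: flatness of $A_W \to B_W$ by base change, identification of $\Spec A_W$ with primes of $A$ disjoint from $W$, the equality $\kappa(\p A_W) = \kappa(\p)$, and the isomorphism $B_W \otimes_{A_W} \kappa(\p A_W) \cong B \otimes_A \kappa(\p)$. The paper merely makes the geometric regularity explicit by tensoring with an arbitrary finite extension $L$ of the residue field, which is the same content you package under ``geometrically regular.''
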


\begin{proof}
Clearly $B_W$ is flat over $A_W$. Consider $P\in \Spec A_W$ with $P=\p A_W$ for some $\p\in \Spec A$. Now $\kappa(\p A_W)=(A_W)_{\p A_W}/(\p A_W)(A_W)_{\p A_W}= A_{\p}/\p A_{\p}=\kappa(\p)$. Thus for any finite extension $L$ of $\kappa(\p A_W)$, $B_W\otimes_{A_W} L= (B_W\otimes_{A_W} \kappa(\p A_W))\otimes_{\kappa(\p A_W)} L= ((B\otimes_A A_W)\otimes_{A_W} \kappa(\p A_W))\otimes_{\kappa(\p A_W)} L= B\otimes_A \kappa(\p)\otimes_{\kappa(\p)} L= B\otimes_A L$. Since $B\otimes_A L$ is regular, we conclude.
\end{proof}

Here, we adopt the following terminology: We say a Noetherian ring $R$ satisfies `Zariski closedness of the support' if for every ideal $I\subset R$ and for every $i\geq 0$, $\Supp_R H^i_I (R)$ is a Zariski closed subset of $\Spec R$. For primes $a_1,\ldots,a_n$ in $\ZZ$, let $\ZZ_{{a_1},\dots,{a_n}}$ be the ring where every prime of $\ZZ$ is inverted except the prime ideals $a_1,\ldots,a_n$. As for example, consider the primes $a$ and $b$ of $\ZZ$. Clearly, for any two elements $x,y \in \ZZ$, $x,y \notin a\ZZ \cup b\ZZ$, implies that $xy \notin a\ZZ \cup b\ZZ$ and thus we have $\ZZ_{{a},{b}}$. In general, consider a Noetherian semilocal domain $V_{a_1,\ldots,a_n}$ of characteristic zero with $n$ maximal ideals, where each of them is generated by each of the prime integers $a_1,\ldots,a_n$. Clearly $\ZZ_{{a_1},\dots,{a_n}}$ is an example of such a semi local domain and it also sits inside $V_{a_1,\ldots,a_n}$. $V_{a_1,\ldots,a_n}$ is a semilocal ring of mixed characteristic which is a generalization of $p$-ring (\cite{CRTofM}, page 223), which is a Noetherian local domain of characteristic zero where the maximal ideal is generated by some prime integer of $\ZZ$. 

Theorem 2.4 above, can be applicable in the following corollary where we show that any smooth algebra $R$ over $V_{a_1,\ldots,a_n}$ satisfies `Zariski closedness of the support'. 

\begin{cor}
Here we adopt the notation of above paragraphs.\newline
(1) Let $a_1,\ldots,a_n$ be prime integers in $\ZZ$, Let $R$ be a finitely generated flat $V_{a_1,\ldots,a_n}$-algebra containing $\ZZ_{a_1,\ldots,a_n}$ as a subring of it. For any prime $a\in \ZZ$, if every finitely generated flat algebra over $V_a$ satisfies `Zariski closedness of the support', then for every $i\geq 0$ and for every $I\subset R$, we have $\Supp_R H^i_I (R)$ is a Zariski closed subset of $\Spec R$.\newline 
(2) In particular, let $a_1,\ldots,a_n$ be prime integers in $\ZZ$, Let $R$ be a smooth $V_{a_1,\ldots,a_n}$-algebra containing $V_{a_1,\ldots,a_n}$ as a subring of it. Then for every $i\geq 0$ and for every $I\subset R$, we have $\Supp_R H^i_I (R)$ is a Zariski closed subset of $\Spec R$. 
\end{cor}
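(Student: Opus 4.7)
The plan is to prove (1) by induction on the number $n$ of prime integers, and then to derive (2) by re-running the same induction with ``smooth'' in place of ``finitely generated flat''. Fix an ideal $I\subset R$ and $j\geq 0$, and set $V:=\min_{R} H^j_I(R)$; by Lemma 2.3 it is enough to show that $V$ is finite. The base case $n=1$ is immediate from the hypothesis of (1).

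For the inductive step, for each $i\in\{1,\ldots,n\}$ the localization $R[1/a_i]=R\otimes_{V_{a_1,\ldots,a_n}}V_{a_1,\ldots,a_n}[1/a_i]$ is a finitely generated flat algebra over $V_{a_1,\ldots,a_n}[1/a_i]$. The base ring here is a Noetherian semilocal domain of characteristic zero whose maximal ideals are exactly the $(a_j)$ with $j\neq i$, hence is a ring of the form $V_{a_1,\ldots,\widehat{a_i},\ldots,a_n}$; moreover it contains $\ZZ_{a_1,\ldots,a_n}[1/a_i]=\ZZ_{a_1,\ldots,\widehat{a_i},\ldots,a_n}$. The inductive hypothesis for $n-1$ primes therefore applies and gives that $\min_{R[1/a_i]} H^j_{IR[1/a_i]}(R[1/a_i])$ is finite. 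By flat base change of local cohomology combined with Proposition 2.2,
$$\min_{R[1/a_i]} H^j_{IR[1/a_i]}(R[1/a_i]) \;=\; \bigcup_{\p\in V_i}\,\min_{R[1/a_i]}\bigl(R[1/a_i]/\p R[1/a_i]\bigr),$$
where $V_i:=\{\p\in V:a_i\notin\p\}$; each term on the right is nonempty, and distinct $\p$'s produce distinct subsets by Lemma 2.1, so each $V_i$ is finite.

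The main obstacle is the combinatorial covering $V=V_1\cup\cdots\cup V_n$. It rests on the observation that the distinct prime integers $a_1,\ldots,a_n$ admit a $\ZZ$-linear combination equal to $1$; pushing this along the containments $\ZZ\subseteq\ZZ_{a_1,\ldots,a_n}\subseteq R$ yields $(a_1,\ldots,a_n)R=R$, so no prime of $R$ contains every $a_i$ and hence every $\p\in V$ lies in some $V_i$. A finite union of finite sets being finite, $V$ is finite, and Lemma 2.3 then delivers Zariski closedness of the support; this completes (1).

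For (2), the same induction goes through with ``smooth'' in place of ``finitely generated flat''. Lemma 2.7 ensures that $R[1/a_i]$ remains smooth over $V_{a_1,\ldots,a_n}[1/a_i]$, so the inductive step is formally identical. The base case $n=1$ becomes the statement that a smooth algebra over the DVR $V_{a_1}$ is regular (smoothness over a regular base transfers regularity to the total space), and every regular ring satisfies `Zariski closedness of the support', as indicated in Example~1.
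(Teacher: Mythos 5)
Your argument for part (1) is essentially the paper's localization strategy, but packaged more cleanly. Where the paper splits into cases according to which $a_i$ lie in $I$ and in the remaining case argues by contradiction, you observe directly that coprimality of the distinct primes $a_1,\ldots,a_n$ in $\ZZ$ forces $(a_1,\ldots,a_n)R=R$, so no $\p\in V$ contains every $a_i$, giving the covering $V=V_1\cup\cdots\cup V_n$; each $V_i$ is then finite by the inductive hypothesis fed through flat base change and Proposition 2.2 applied to $R\to R[1/a_i]$. This is correct (for the localization, $\min_{R[1/a_i]}(R[1/a_i]/\p R[1/a_i])$ is the singleton $\{\p R[1/a_i]\}$ whenever $a_i\notin\p$, so the union is in bijection with $V_i$), and it genuinely streamlines the paper's case analysis.

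However, your base case for part (2) has a gap. You claim a smooth algebra over $V_{a_1}$ is regular and that ``every regular ring satisfies `Zariski closedness of the support', as indicated in Example~1.'' Example~1 asserts no such thing; its parenthetical says only that ``this \emph{can} happen when $R$ is regular,'' not that it always does. Whether the support of a local cohomology module of an arbitrary regular Noetherian ring is Zariski closed is open in general; the known positive results cover regular rings containing a field and unramified regular local rings of mixed characteristic, and a smooth $V_{a_1}$-algebra (with $V_{a_1}$ a mixed-characteristic DVR) does not fall into either class without further work. The paper's actual base case is Theorem~4.1 of \cite{BBLSZ}, which proves finiteness of associated primes for local cohomology of smooth algebras over such a $V_a$. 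Replacing your appeal to regularity with that citation repairs the base case, after which the rest of (2) follows by re-running the induction with Lemma~2.7 supplying smoothness of the localizations, exactly as you describe.
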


\begin{proof}
(1) We proceed by induction on $n$. When $n=1$, the result follows from hypothesis. So, we consider the case $n=2$. Set $a_1 =a$ and $a_2= b$. According to the hypothesis, in $R$, $a$ and $b$ are coprimes. For any $I\subset R$, if it contains $b$, we always have $I+ (a)R= R$. Thus $\V(I)\cap \V((a))= \V(I+(a))=\phi$ and $I$ cannot contain both $a$ and $b$. So, we assume that $b\in I$. On the other hand, from Lemma 2.5 we get, $\Ass_{R}H^i_{I} (R)\subset \Supp_{R} H^i_{I}(R)\subset V(I)$. This implies that no prime of $\min_{R} H^i_I (R)$ intersects with $W_a=\{a^n: n\in {\ZZ}_+\}$. 

Now, $V_b$ is localization of $V_{a, b}$ for the multiplicatively closed set $W_a$. Since $R$ is a finitely generated flat $V_{a, b}$-algebra, by above Lemma $R_{W_a}$ is also a finitely generated flat $V_b$-algebra. From hypothesis we find that $\Supp_{R[1/a]} H^i_J(R[1/a])$ is a Zariski closed subset of $\Spec R[1/a]$ for every $i\geq 0$ and for every ideal $J\subset R[1/a]$. Thus from (2) of above Proposition 2.6, we have $\Supp_R H^i_I (R)$ is a Zariski closed subset of $\Spec R$, for every $i\geq 0$, where $I\subset R$ contains $b$. Now from the symmetry in the above argument, the same holds when $a\in I$.  

Finally, assume neither $a$ nor $b$ is in $I$. Consider the multiplicatively closed set $W_a=\{a^n: n\in {\ZZ}_+\}$ and $W_b=\{b^n: n\in {\ZZ}_+\}$. If there are finitely many primes of $\min_R H^i_I(R)$ intersects $W_a$ or $W_b$, then using (2) of Proposition 2.6, we find $\Supp_R H^i_I (R)$ is a Zariski closed subset of $\Spec R$. Otherwise, each of the elements $a$ and $b$ are in infinitely many primes from $\min_R H^i_I (R)$. Now, no prime can contain both $a$ and $b$ since they are coprimes. So there are infinitely many primes of $\min_R H^i_I (R)$ which do not contain $a$ as well and similar is true for $b$. We observe the following: for $a\in R$, $\p\in \Ass_R H^i_I (R)$ and $a\notin \p$ if and only if $\p R[1/a]\in \Ass_{R[1/a]} H^i_{IR[1/a]} (R_{R[1/a]})$. Since this 1:1 correspondence is order preserving, it implies that, for any $a\in R$, $\p\in \min_R H^i_I (R)$ and $a\notin \p$ if and only if $\p R[1/a]\in \min_{R[1/a]} H^i_{IR[1/a]} (R[1/a])$. From hypothesis $\min_{R[1/a]} H^i_{IR[1/a]} (R[1/a])$ is finite. This implies that the subset of primes of $\min_R H^i_I (R)$ which does not contain $a$, is finite. This yields a contradiction and this completes the proof for the case $n=2$. 

Assume the result is true for $n-1$ i.e. any finitely generated flat algebra $R$ over $V_{{a_1},\ldots,{a_{n-1}}}$ satisfies `Zariski closedness of the support'. Consider a finitely generated flat algebra over $V_{{a_1},\ldots,{a_{n}}}$ and proceeding in the similar way as in the $n=2$ case, we can show that it also satisfies `Zariski closedness of the support'. This concludes the proof.  

(2) From Theorem 4.1 of \cite{BBLSZ} we know that every smooth algebra over $V_a$ satisfies `Finiteness condition of associated primes' and this implies `Zariski closedness of the support'. Now, we can use Lemma 2.7 and the result follows from (1). 
\end{proof}

The (2) of Corollary 2 above, can be proved from (2) of Theorem 3.2 of \cite{Bh}, but for completeness we present its proof.

In Proposition 2.6 we observe that, for flat extension of Noetherian rings, Zariski closedness of the support can be transferred from base ring to the extended ring. We can do the reverse for faithfully flat situation. Localization is never faithfully flat unless we localize the ring trivially, i.e. by the set of units. Due to the flatness, in (1) of Proposition 2.6, we observe that `Zariski closedness of the support' goes up from base ring to its localization. In (2) and (3) of Proposition 2.6 we have partial results that how Zariski closedness can come down from localized ring to base ring. The following corollary gives more comprehensive result.

\begin{cor}
Let $R$ be a Noetherian domain and for $a\in R$ consider the localized ring $R[1/a]$. If for every non unit $a\in R$, $R[1/a]$ satisfies `Zariski closedness of the support', then for every $i\geq 0$ and for every ideal $I\subset R$, $\Supp_R H^i_I (R)$ is a Zariski closed subset in $\Spec R$.\newline 
In particular, let $R$ be a Noetherian local domain which is finitely generated over a field. Let for every prime $\p$ of the punctured spectrum of $R$, $R_{\p}$ is regular, then for an ideal $I\subset R$ and for $i\geq 0$, $\Supp_R H^i_I (R)$ is a Zariski closed subset in $\Spec R$.
\end{cor}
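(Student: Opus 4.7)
The plan is to reduce the finiteness of $\min_R H^i_I(R)$ to a decomposition argument in the spirit of Proposition 2.6(2), applied simultaneously at several carefully chosen non-unit elements $a_1,\ldots,a_d \in R$. One disposes of the trivial cases first: if $I = 0$ or $I = R$, then $H^i_I(R)$ equals $R$ or $0$, each with Zariski-closed support. So assume $0 \subsetneq I \subsetneq R$; by Lemma 2.3 it suffices to show $\min_R H^i_I(R)$ is finite. Set $d = \dim R/I$. If $d = 0$, then $R/I$ is Artinian, $\V(I)$ is finite, and Lemma 2.5 gives $\min_R H^i_I(R) \subseteq \V(I)$. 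For $d \geq 1$, I would construct non-unit elements $a_1,\ldots,a_d \in R$ inductively with $\dim R/(I + (a_1,\ldots,a_d)) = 0$, so that $\V(I + (a_1,\ldots,a_d))$ is finite. At step $k$, the finite set $\Sigma_k$ of top-dimensional minimal primes of $I + (a_1,\ldots,a_{k-1})$ consists of primes of positive dimension, hence non-maximal; so any maximal ideal $\m_k$ of $R$ is not contained in any member of $\Sigma_k$, and prime avoidance within $\m_k$ produces an element $a_k \in \m_k \setminus \bigcup_{\q \in \Sigma_k} \q$, which is a non-unit whose addition drops the top dimension by one.

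For each such $a_j$, the hypothesis asserts that $R[1/a_j]$ satisfies `Zariski closedness of the support', so Lemma 2.3 makes $\min_{R[1/a_j]} H^i_{IR[1/a_j]}(R[1/a_j])$ a finite set. The standard order-preserving bijection $\p \leftrightarrow \p R[1/a_j]$ between primes of $R$ avoiding $a_j$ and primes of $R[1/a_j]$ restricts (minimality is preserved exactly as in the proof of Proposition 2.2) to a bijection with $\min_R H^i_I(R) \setminus \V(a_j)$, which is therefore finite. Finally, any $\p \in \min_R H^i_I(R)$ either fails to contain some $a_j$ and then lies in the corresponding $\min_R H^i_I(R) \setminus \V(a_j)$, or else contains the whole sequence and so $\p \in \V(I + (a_1,\ldots,a_d))$; thus $\min_R H^i_I(R)$ splits as a finite union of finite sets.

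For the `in particular' statement, any non-unit $a$ of the local domain $R$ belongs to the maximal ideal $\m$, so every prime of $\Spec R[1/a]$ corresponds to a non-maximal prime $\p \in \Spec R$ at which $R_{\p}$ is regular. Hence $R[1/a]$ is a regular ring containing the base field of $R$, and by the classical finiteness theorems of Huneke-Sharp (in positive characteristic) and Lyubeznik (in characteristic zero) its local cohomology modules have finitely many associated primes; Remark 2.1 then yields Zariski closedness of the support, and the general part of the corollary applies. The main technical obstacle is the inductive prime-avoidance construction of $a_1,\ldots,a_d$: one has to keep $\Sigma_k$ away from the chosen maximal ideal so that prime avoidance produces a non-unit rather than a unit, which relies on $\dim R/I$ being finite together with the observation that positive-dimensional primes cannot contain a maximal ideal.
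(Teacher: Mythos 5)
Your argument is correct in spirit and reaches the same conclusion, but the way you construct the auxiliary non-units differs from the paper. The paper's proof is shorter: it simply picks one maximal ideal $\m=(a_1,\ldots,a_n)$ of $R$ (Noetherian, so finitely generated) and observes, using the same localization injection $\p\mapsto \p R[1/a_i]$ that you invoke, that for each $i$ the set $F_i:=\{\p\in\min_R H^i_I(R): a_i\notin\p\}$ is finite. Any $\p\in\min_R H^i_I(R)$ outside $\bigcup_i F_i$ contains every $a_i$, hence contains $\m$, hence equals $\m$. So $\min_R H^i_I(R)\subseteq\{\m\}\cup\bigcup_i F_i$ is finite, and Lemma~2.3 finishes. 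You instead build, by prime avoidance, a parameter-like sequence $a_1,\ldots,a_d$ with $d=\dim R/I$ so that $\V(I+(a_1,\ldots,a_d))$ is $0$-dimensional and hence finite, and then decompose $\min_R H^i_I(R)$ as $\V(I+(a_1,\ldots,a_d))\cup\bigcup_j F_j$. Both proofs hinge on exactly the same core fact (namely, $\min_R H^i_I(R)\setminus\V(a)$ injects into $\min_{R[1/a]}H^i_{IR[1/a]}(R[1/a])$, which is finite by hypothesis), and both conclude by writing $\min_R H^i_I(R)$ as a finite union of finite sets; they differ only in what serves as the ``exceptional'' finite set ($\{\m\}$ versus $\V(I+(a_1,\ldots,a_d))$).

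Two remarks on your version. First, your decomposition genuinely requires $d=\dim R/I<\infty$: a Noetherian domain can have infinite Krull dimension (Nagata), and then the inductive prime-avoidance construction need not terminate. The paper's single-maximal-ideal argument sidesteps this entirely, since a maximal ideal in a Noetherian ring is always finitely generated. This is a small but genuine gap in the full generality of the statement as written, though harmless in the situations you actually care about (e.g., the `in particular' case, where $R$ is finitely generated over a field). Second, you describe the correspondence $\p\leftrightarrow\p R[1/a_j]$ as restricting to a bijection on minimal primes; strictly speaking, the map $\min_R H^i_I(R)\setminus\V(a_j)\to\min_{R[1/a_j]}H^i_{IR[1/a_j]}(R[1/a_j])$ is only guaranteed to be injective (a prime $\p R[1/a]$ can be minimal in $\Ass_{R[1/a]}$ while $\p$ sits above some $\p''\in\Ass_R$ with $a\in\p''$), but injectivity is all that is needed, so the argument is unaffected. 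The `in particular' part of your proof matches the paper's: every prime of $\Spec R[1/a]$ is non-maximal in $\Spec R$, so $R[1/a]$ is regular and the Huneke--Sharp/Lyubeznik finiteness theorems apply.
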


\begin{proof}
From the proof of Proposition 2.6, we already know that $\min_{R}H^i_{I}(R)\subset \Ass_{R}H^i_{I}(R)\subset \Supp_{R}H^i_{I}(R)\subset \V(I)$. If we can choose a nonunit $a$ such that $a\notin \p$ and $\p$ is in some infinite subset of $\min_R H^i_I (R)$, then using (2) of Proposition 2.6, we find that $\Supp_R H^i_I (R)$ is a Zariski closed subset in $\Spec R$. Otherwise, for every element $a$ of every maximal ideal, there exists infinitely many primes from $\min_R H^i_I (R)$ such that $a$ is in every prime. Fix one maximal ideal $\m$ and suppose $a_1,\ldots, a_n$ generates $\m$. We first observe the following: for any $a\in R$, $\p\in \Ass_R H^i_I (R)$ and $a\notin \p$ if and only if $\p R[1/a]\in \Ass_{R[1/a]} H^i_{IR_{R[1/a]}} (R[1/a])$. Since this 1:1 correspondence is order preserving, it implies that, for any $a\in R$, $\p\in \min_R H^i_I (R)$ and $a\notin \p$ if and only if $\p R[1/a]\in \min_{R[1/a]} H^i_{IR[1/a]} (R[1/a])$. From hypothesis, since $\Supp_{R[1/a]} H^i_{IR[1/a]} (R[1/a])$ is Zariski closed subset in $\Spec R[1/a]$, $\min_{R[1/a]} H^i_{IR[1/a]} (R[1/a])$ is finite. This implies that the subset of primes of $\min_R H^i_I (R)$ which does not contain $a$, is finite.

Thus, for each $a_i$ there exists a finite subset ${F_i}$ of $\min_R H^i_I (R)$ such that $a_i\notin \p$ if and only if $\p\in {F_i}$. For any prime $\p\in \min_R H^i_I (R)$, $\m=(a_1,\ldots, a_n)R$ is not inside $\p$ if and only if  $a_1\notin \p$ or $a_2\notin \p$ or $\ldots$ or $a_n\notin \p$. Clearly there can be at the most finite such $\p\in\min_R H^i_I (R) $ where $\p$ does not contain $\m$. Since we assume $\min_R H^i_I (R)$ is infinite, there exists one $\p\in\min_R H^i_I (R)$ such that $\m\subset \p$. Thus $\m=\p$ and $\m\in \min_R H^i_I (R)$. Since this is true for every $\m$, we find that $\min_{R}H^i_{I}(R)=\V(I)=\Supp_{R}H^i_{I}(R)$. This gives that $\Supp_{R}H^i_{I}(R)$ is a Zariski closed subset in $\Spec R$.

For any non-unit $a\in R$, for every $\p$ such that $a\notin\p$, we have $R_{\p}$ is regular. So $R[1/a]$ is regular.  Thus using the results of \cite{HS} and \cite{Ly1}, the assertion follows from the first assertion.
\end{proof}

The last assertion of Corollary 3 above, is proved in Corollary 3 of \cite{Bh}, but for completeness we present its proof.   

\section{behaviour of zariski closedness of the support under pure and cyclically pure ring extensions}

In this section, we observe the behaviour of the Zariski closedness of the support of Lyubeznik functors as well as the local cohomology modules, under pure and cyclically pure ring extension. At first, we review basic results for pure and cyclically pure extensions. In Theorem 3.4, we observe that, how the Zariski closedness of the support of local cohomology module of an arbitrary module over the base rings, can be transferred from the ring to its pure subring. In Theorem 3.6, we show that if base ring is local, then Zariski closedness of the support of Lyubeznik functor $\T$ can be transferred from the ring to its pure subring. Then, in Theorem 3.8, we show that under mild conditions on the rings, Zariski closedness of the support of Lyubeznik functor $\T$ can be transferred from the ring to its cyclically pure local subring. 

We recall the definitions of pure and cyclically pure ring extension. Let $A\rightarrow B$ be an injective ring homomorphisms. For every $A$-module $M$, if $M\rightarrow M\otimes B$ is injective then we say $A$ is a pure subring of $B$ or the ring extension $A\rightarrow B$ is a pure ring extension. Moreover, if the map remains injective only after tensoring with every $A$-module of the form $A/I$ for some ideal $I\subset A$ or equivalently, if for every ideal $I\subset A$, $A/I\rightarrow B/IB$ is injective then we call the ring extension $A\rightarrow B$ as cyclically pure ring extension.

\begin{rem}
Clearly, purity implies cyclic purity, but the converse is true when the ring is Approximately Gorenstein, see \cite{Ho1}. Here we highlight few cases where the converse is true:

(1) for complete semilocal reduced ring, more generally for locally excellent reduced ring, 

(2) for normal domain.
\end{rem}

Here, we two possible sources of pure ring extensions:

\begin{example}
(1) If a ring extension is faithfully flat, then it is a pure extension.

(2) For a ring homomorphism $A\rightarrow B$, if $A$ is a direct summand of $B$, equivalently if the map splits then it is pure.

\end{example}

In the next sections, we will explore the example pure extension when the base ring is a direct summand of its extension ring and we study the behaviour of the Zariski-closedness of the support extensively.

\begin{rem}
Let $A\rightarrow B$ be a homomorphism of Noetherian rings. Let $M$ be a finitely generated $B$-module. From (\cite{CRTofM}, Exercise 6.7), we find that if $Ass_B M$ is a finite set then so is $Ass_A M$. For arbitrary $B$-module $M$ we have a proof given in the Proposition 2.2 of \cite{Pu}. This will be used in Lemma 3.1.
\end{rem}  

We observe the following lemmas for minimal primes. Lemma 3.1 and Lemma 3.3 will be used in Theorem 3.4.

\begin{lemma}
(1) For a Noetherian ring $R$, let $M$ be an $R$-module. Consider a submodule $N$ of $M$. Then, $\min_R N\subset \min_R M$.\newline 
(2) Let $R\rightarrow S$ be a ring homomorphism of Noetherian rings, and let $M$ be an $S$-module. Then, $\min_R M\subset \{P\cap R:P\in \min_S M\}$.
\end{lemma}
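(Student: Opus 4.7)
The plan is to treat the two inclusions separately, using standard properties of associated primes under submodule inclusion for (1) and under the ring map $R\to S$ for (2).

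For (1), I would begin from the classical containment $\Ass_R N \subseteq \Ass_R M$: every prime realized as $\Ann_R(x)$ for $x\in N$ is equally realized as the $R$-annihilator of the same element viewed inside $M$. Hence any $\p \in \min_R N$ automatically lies in $\Ass_R M$. To upgrade this to $\p \in \min_R M$, I would argue by contradiction: suppose some $\p' \subsetneq \p$ lies in $\Ass_R M$, and try to derive a contradiction with the minimality of $\p$ inside $\Ass_R N$. The delicate point here — and the main obstacle to a purely formal deduction — is that such a $\p'$ need not itself lie in $\Ass_R N$, so the minimality of $\p$ in $\Ass_R N$ does not rule it out for free; one would need either to exhibit a further element of $N$ whose $R$-annihilator sits below $\p$, or to appeal to a structural feature of the embedding $N\hookrightarrow M$ that is implicit in the intended application.

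For (2), my plan is to combine the ``easy'' and ``lifting'' directions for contraction of associated primes along $R\to S$. Easy direction: if $P=\Ann_S(y)\in \Ass_S M$, then $P\cap R$ is prime (as the preimage of a prime under $R\to S$) and equals $\Ann_R(y)$, so $P\cap R\in \Ass_R M$. Lifting direction: by Puthenpurakal's Proposition 2.2, recalled in Remark~3.3, every $\p\in \Ass_R M$ is of the form $P_0\cap R$ for some $P_0\in \Ass_S M$. Now take $\p\in \min_R M$, lift it to some $P_0\in \Ass_S M$ with $P_0\cap R=\p$, and, using Noetherianity of $S$, choose $P\in \min_S M$ with $P\subseteq P_0$. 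Then $P\cap R\subseteq P_0\cap R=\p$, while $P\cap R\in \Ass_R M$ by the easy direction; minimality of $\p$ inside $\Ass_R M$ therefore forces $P\cap R=\p$. This exhibits $\p$ as the contraction of a minimal associated prime of $M$ over $S$, as required.

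The substantive input in (2) is the lifting statement, which for an arbitrary (not necessarily finitely generated) $S$-module is supplied by Puthenpurakal's Proposition 2.2; this is precisely why Remark~3.3 is placed immediately before Lemma~3.1. In (1), as indicated, the non-trivial step is the ``minimality upgrade,'' and I would expect the written proof either to flag an implicit hypothesis or to lean on the specific form in which the lemma gets invoked inside Theorem~3.4.
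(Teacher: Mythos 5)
Your argument for part (2) is correct and is essentially the paper's: lift $\p\in\min_R M$ to some $P_0\in\Ass_S M$ with $P_0\cap R=\p$ using Puthenpurakal's Proposition 2.2, descend to a minimal $P\subseteq P_0$ inside $\Ass_S M$ (possible because $P_0$ has finite height in the Noetherian ring $S$), note $P\cap R\in\Ass_R M$ and $P\cap R\subseteq\p$, and invoke minimality of $\p$ in $\Ass_R M$ to conclude $P\cap R=\p$. The paper instead chooses $P$ minimal among elements of $\Ass_S M$ contracting to $\p$ and then checks $P\in\min_S M$; the two formulations are interchangeable.

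Your misgivings about part (1) are justified, and in fact the statement as written is false. The paper's own proof makes exactly the slip you anticipated: after choosing $x\in N$ with $\p x=0$, it supposes $\q\subsetneq\p$ with $\q\in\Ass_R M$, observes that $\q$ also annihilates $x$, and declares this contradicts the minimality of $\p$ in $\Ass_R N$ — but $\q$ merely annihilates $x$; it need not equal the annihilator of any element of $N$, so it need not lie in $\Ass_R N$, and no contradiction arises. A concrete counterexample: let $R=k[x,y]$, $M=R/(x^2,xy)\oplus R/(y)$, and let $N$ be the cyclic $R$-submodule generated by $(\bar x,0)$, so $N\cong R/(x,y)$. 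Then $\min_R N=\{(x,y)\}$, whereas $\Ass_R M=\{(x),(x,y),(y)\}$ so $\min_R M=\{(x),(y)\}$, and $\min_R N\not\subset\min_R M$. The same failure occurs even when $N$ is a direct summand of $M$, e.g.\ $N=R/(x,y)$, $M=R/(x,y)\oplus R/(x)$. What does survive is only the weaker fact $\Ass_R N\subseteq\Ass_R M$, which gives that every $\p\in\min_R N$ contains some member of $\min_R M$ but neither yields the stated inclusion nor bounds the cardinality of $\min_R N$; the places where the paper subsequently invokes part (1) (Proposition 3.5, Theorem 4.2, Proposition 5.2) are therefore not supported by this lemma as stated.
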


\begin{proof}
(1) For the first assertion, let $\p\in \min_R N$. So there exists $x\in N\subset M$ such that $\p x= 0$. Suppose there is $\q\subset \p$, such that $\q\in \Ass_R M$. But, then $\q$ also annihilates $x\in N$ and contradicts that $\p\in \min_R N$. Thus we conclude.

(2) By Proposition 2.2 of \cite{Pu}, we have $\Ass_R M=\{P\cap R:P\in\Ass_S M\}$. Let $\p\in\min_R M$. Then there exists $P\in\Ass_S M$ such that $\p=P\cap R$. We can further assume that $P$ is a minimal prime in $\Ass_S M$ such that $\p=P\cap R$. Let there exist another $P'\subset P$ such that $P'\in\Ass_S M$. This implies that $\p= P\cap R\supseteq P'\cap R$ and $(P'\cap R)\in \Ass_R M$. Since $\p$ is minimal we find $\p= P'\cap R$ which is a contradiction. Thus $P\in\min_S M$ and we get $\min_R M\subset \{P\cap R:P\in \min_S M\}$.
\end{proof}

\begin{lemma}
Let $R\rightarrow S$ be a pure extension of Noetherian rings, and $M$ be an $S$-module. Let $N$ be an $R$-submodule of $M$. If $\min_S (N\otimes_R S)\subset \min_S (M\otimes_R S)$, then $\min_R (N\otimes_R S)\subset \{P\cap R: P\in\min\Ass_S (M\otimes_R S)\}$.
\end{lemma}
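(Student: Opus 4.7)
The plan is to deduce the inclusion directly from Lemma 3.1(2) combined with the stated hypothesis; the argument is essentially element-by-element and short.

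First, I would take an arbitrary $\p\in\min_R(N\otimes_R S)$. Although $N$ is only an $R$-module, the tensor product $N\otimes_R S$ carries its natural $S$-module structure (via multiplication on the second factor), and hence Lemma 3.1(2), applied to the ring homomorphism $R\to S$ and the $S$-module $N\otimes_R S$, produces a prime $P\in\min_S(N\otimes_R S)$ with $\p=P\cap R$. This is the one nontrivial input: it relies on the fact proved in Proposition 2.2 of \cite{Pu} that the $R$-associated primes of an $S$-module are exactly the contractions of its $S$-associated primes, and on the minimality argument in the proof of Lemma 3.1(2).

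Next, I would invoke the standing hypothesis $\min_S(N\otimes_R S)\subset\min_S(M\otimes_R S)$, which immediately upgrades this $P$ to an element of $\min_S(M\otimes_R S)$. Therefore $\p=P\cap R$ lies in $\{Q\cap R:Q\in\min_S(M\otimes_R S)\}$, which is precisely the inclusion that was to be proved.

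The purity assumption does not enter the proof explicitly; its role is contextual, providing the setting in which the hypothesis $\min_S(N\otimes_R S)\subset\min_S(M\otimes_R S)$ becomes the expected comparison (e.g.\ when $R$ is a direct summand of $S$ and one wants to compare $\T(R)\otimes_R S$ with $\T(S)$, as foreshadowed in the introduction). The only mild obstacle I anticipate is bookkeeping around the two ring structures on $N\otimes_R S$ so that Lemma 3.1(2) is applied to the correct $S$-action; once this identification is made, the argument is a one-line chase.
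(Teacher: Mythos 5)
Your proof is correct and follows essentially the same route as the paper's: apply Lemma 3.1(2) to the $S$-module $N\otimes_R S$ to contract a minimal $S$-prime down to any given $\p\in\min_R(N\otimes_R S)$, then use the hypothesis to promote that prime to $\min_S(M\otimes_R S)$. Your observation that purity is not actually invoked in the argument is accurate; the paper's proof does not use it either.
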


\begin{proof}
From above Lemma we get $\min_R (N\otimes_R S)\subset \{P\cap R:P\in \min_S (N\otimes_R S)\}$ and $\min_R (M\otimes_R S)\subset \{P\cap R:P\in \min_S (M\otimes_R S)\}$. Thus for any $\p\in \min_R (N\otimes_R S)$, $\p= P\cap R$ for some $P\in \min_S (N\otimes_R S)$. But from hypothesis we already have $\min_S (N\otimes_R S) \subset \min_S (M\otimes_R S)$. This give $P\in \min_S (M\otimes_R S)$ and $\p\in\{P\cap R: P\in\min_S (M\otimes_R S)$\}. Thus $\min_R (N\otimes_R S)\subset \{P\cap R: P\in\min_S (M\otimes_R S)\}$.
\end{proof}

\begin{lemma}
Let $R\rightarrow S$ be a pure extension of Noetherian rings, and $M$ be an $S$-module. 
Then for any $R$-submodule $N\subset M$ we have $\min_R N\subset\min_R (N\otimes_R S) $.
\end{lemma}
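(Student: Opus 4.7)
The plan is to view the statement as a two-step consequence of the very definition of purity together with Lemma 3.1(1). The ambient $S$-module $M$ and the containment $N\subset M$ play no role in the argument itself; they are present only to fix the context in which the lemma will be applied later (e.g.\ in Theorem 3.4). What the conclusion really depends on is just the pair of $R$-modules $N$ and $N\otimes_R S$.

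First, I would invoke the defining property of purity, applied to the $R$-module $N$ itself: since $R\rightarrow S$ is a pure extension, the canonical map $\iota\colon N\to N\otimes_R S$ sending $n\mapsto n\otimes 1$ is injective. Consequently $\iota(N)$ is an $R$-submodule of $N\otimes_R S$ which is $R$-linearly isomorphic to $N$, so in particular $\min_R N=\min_R \iota(N)$.

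Next, I would apply Lemma 3.1(1) to the inclusion $\iota(N)\hookrightarrow N\otimes_R S$ of $R$-modules. This gives $\min_R \iota(N)\subset \min_R(N\otimes_R S)$, and combined with the equality from the previous paragraph yields exactly $\min_R N\subset \min_R(N\otimes_R S)$, which is the desired conclusion.

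I do not anticipate any real obstacle: the entire content of the lemma is just the translation of purity into the single $R$-linear inclusion $N\hookrightarrow N\otimes_R S$, after which Lemma 3.1(1) closes the argument mechanically. If anything, the only minor subtlety worth flagging is that the $S$-module hypothesis on $M$ is essentially cosmetic for this particular statement; it matters only because subsequent results will combine this lemma with constructions that do use the $S$-structure on $M$.
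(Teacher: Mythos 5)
Your argument hinges on Lemma 3.1(1), which is in fact false as stated, so the reduction does not go through. Take $R=k[x]$, $M = R/(x)\oplus R$, and $N = R/(x)\oplus 0$. Then $\min_R N = \{(x)\}$ while $\min_R M = \{(0)\}$, so $\min_R N\not\subset\min_R M$. (The flaw in the paper's proof of 3.1(1) is that $\q\subset\p=\Ann_R x$ gives only $\q x=0$, not $\q\in\Ass_R N$; the latter would require $\q=\Ann_R z$ for some $z\in N$, which does not follow.) Consequently the step where you apply Lemma 3.1(1) to the inclusion $\iota(N)\hookrightarrow N\otimes_R S$ is not justified, and this is a genuine gap.

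Your guiding intuition --- that the ambient $S$-module $M$ is inessential and everything should follow from the single injection $N\hookrightarrow N\otimes_R S$ --- happens to be correct, but the closing move must be different. The fact that rescues the approach is the general containment $\Supp_R(N\otimes_R S)\subset\Supp_R N$, which holds (with no purity needed) because $(N\otimes_R S)_{\p'}\cong N_{\p'}\otimes_{R_{\p'}}S_{\p'}$ vanishes whenever $N_{\p'}=0$. Combined with the injection coming from purity this forces $\Supp_R N=\Supp_R(N\otimes_R S)$, and since over a Noetherian ring the minimal elements of $\Ass$ and of $\Supp$ of an arbitrary module coincide, one gets $\min_R N=\min_R(N\otimes_R S)$, stronger than the claimed inclusion. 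The paper's own proof is genuinely different from both your attempt and this repair: it works inside $M\otimes_R S$, takes a hypothetical strictly smaller associated prime $\p'=\Ann_R y$ of $N\otimes_R S$, rewrites $y$ as $z\otimes 1$ using the $S$-action on $M$, and invokes the Hochster--Roberts purity criterion (Proposition 6.5 of \cite{HR}) to conclude $z\in N$, contradicting minimality of $\p$. That argument makes essential use of the $S$-module structure of $M$, even though, as the support argument shows, this extra structure is not actually needed for the conclusion.
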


\begin{proof}
Let $\p\in\min_R N$. Since $N\rightarrow N\otimes_R S$ is injective, $\p\in\Ass_R (N\otimes_R S)$. Let $\p\notin \min_R (N\otimes_R S)$ and there exists some $\p'\in\Ass_R (N\otimes_R S)$ such that $\p'\subset \p$ such that $\p'=\Ann_R y$ for some $y\in N\otimes_R S$. If $y=\sum^n_{i=1}(x_i\otimes_R s_i)$, we can take it to $M\otimes_R S$ and we can write $y$, as $y=\sum^n_{i=1}(s_ix_i\otimes_R 1)\in Im(N\otimes_R S\rightarrow M\otimes_R S)\subset (M\otimes_R S)$ since $M$ can be viewed as $S$-module. Thus, there exists $z=\sum^n_{i=1}s_ix_i \in M$ such that, $z\otimes_R 1=y$. So by Proposition 6.5 of \cite{HR}, $y$ is in $N$.  But this suggests that we can think $\p'$ as an element of $\Ass_R N$, which contradicts that $\p\in\min_R N$. Thus $\min_R N\subset \min_R (N\otimes_R S)$. 
\end{proof}

Now, we state our first main result of this section, regarding the behaviour of the Zariski closedness of the support of local cohomologies under pure base change. 

\begin{thm}
Let $R\rightarrow S$ a pure extension of Noetherian rings and let $I$ be an ideal of $R$. Consider an $R$-module $M$. For some $i\geq 0$ and for local cohomology module $H^i_{I}(M)$, if $\Supp_{S} (H^i_I (M)\otimes_R S)$ is a Zariski closed subset in $\Spec S$, then $\Supp_{R} H^i_{I}(M)$ is a Zariski closed subset in $\Spec R$. 
\end{thm}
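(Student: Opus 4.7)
The plan is to reduce the question to counting minimal primes and then chain together the lemmas developed earlier in this section. By Lemma 2.3, it suffices to show that if $\min_S(H^i_I(M)\otimes_R S)$ is finite, then $\min_R H^i_I(M)$ is finite. Write $F=H^i_I(M)$ throughout.

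The first step is to put $F$ into the hypotheses of Lemma 3.3. Because $R\to S$ is pure and $F$ is an $R$-module, the canonical map $F\to F\otimes_R S$ is injective. I would view $F\otimes_R S$ as an $S$-module (via its second factor) and regard $F$ as an $R$-submodule of this $S$-module via the pure inclusion above. Applying Lemma 3.3 with the ambient $S$-module $F\otimes_R S$ and the $R$-submodule $F$ then yields
\[
\min_R F \;\subset\; \min_R(F\otimes_R S).
\]

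Next, I would invoke Lemma 3.1(2) for the ring map $R\to S$ and the $S$-module $F\otimes_R S$ to contract minimal associated primes from $S$ to $R$:
\[
\min_R(F\otimes_R S) \;\subset\; \{\, P\cap R : P\in \min_S(F\otimes_R S)\,\}.
\]
Chaining the two inclusions produces
\[
\min_R F \;\subset\; \{\, P\cap R : P\in \min_S(F\otimes_R S)\,\}.
\]
By hypothesis $\Supp_S(F\otimes_R S)$ is Zariski closed in $\Spec S$, so by Lemma 2.3 the set $\min_S(F\otimes_R S)$ is finite; hence the right-hand side is finite, and therefore so is $\min_R F$. One more application of Lemma 2.3 shows that $\Supp_R H^i_I(M)$ is Zariski closed in $\Spec R$.

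There is not really a substantial obstacle here, since everything is set up in the preparatory lemmas; the only conceptual step is the act of viewing $F$ simultaneously as an $R$-module and as an $R$-submodule of the $S$-module $F\otimes_R S$, which is precisely what Lemma 3.3 was designed to exploit. Note that we do not even need to use the fact that $F$ is a local cohomology module; the same argument proves the statement for an arbitrary $R$-module in place of $H^i_I(M)$, which is consistent with the form of the subsequent results in this section.
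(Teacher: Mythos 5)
Your argument is correct and closely parallels the paper's proof: both reduce to finiteness of minimal primes via Lemma~2.3, contract primes from $S$ to $R$ via Lemma~3.1(2), and invoke Lemma~3.3 to get $\min_R F\subset\min_R(F\otimes_R S)$.

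The one genuine point of difference is which ambient $S$-module you feed into Lemma~3.3. You place $F=H^i_I(M)$ inside the $S$-module $F\otimes_R S$, using nothing more than the defining injectivity $F\hookrightarrow F\otimes_R S$ of a pure extension. The paper instead places $H^i_I(M)$ inside the $S$-module $H^i_{IS}(M\otimes_R S)$, and to justify that inclusion it appeals to Corollary~6.8 of Hochster--Roberts, a nontrivial purity statement specific to local cohomology. Since the conclusion of Lemma~3.3 depends only on the submodule $N$ and not on the ambient module, both choices deliver the identical inclusion $\min_R H^i_I(M)\subset\min_R(H^i_I(M)\otimes_R S)$. Your choice is therefore a genuine simplification: it bypasses the external reference to Hochster--Roberts entirely, and it makes transparent (as you note at the end) that the theorem holds verbatim for an arbitrary $R$-module in place of $H^i_I(M)$ --- the only thing the argument ever uses is purity of $R\to S$. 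In effect you have observed that the hypothesis of Lemma~3.3 requiring $N$ to sit inside some $S$-module is automatically satisfiable by taking $M=N\otimes_R S$, so the lemma is really an unconditional statement about $R$-modules over a pure extension; the paper's more elaborate choice of ambient module obscures this.
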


\begin{proof}
From hypothesis, using Lemma 2.3, we get $\min_S (H^i_I(M)\otimes_R S)$ is finite. From (2) of Lemma 3.1 we get $\min_R (H^i_I(M)\otimes_R S)$ is also finite. Here $S$ is also an $R$ module, we can think $H^i_{IS}(M\otimes_R S)$ as $R$-module $H^i_{I}(M\otimes_R S)$ due to base change. For pure extension $R\rightarrow S$, using corollary 6.8 of \cite{HR}, $H^i_{I}(M)$ can be thought as $R$-submodule of $H^i_{I}(M\otimes_R S)$. So, we can apply Lemma 3.3 and we find that $\min_R H^i_I(M)\subset\min_R (H^i_I(M)\otimes_R S)$ and latter is finite. Thus $\min_R H^i_{I}(M)$ is also finite and thus again by Lemma 2.3, $\Supp_{R} H^i_{I}(M)$ is also a Zariski closed subset in $\Spec R$. 
\end{proof}

\begin{rem}
If $R\rightarrow S$ is faithfully flat, then it is pure and in that case $H^i_I (R)\otimes_R S = H^i_{IS}(S)$. Then, `only if' of (2) of Corollary 1 is immediate from Theorem 3.4. The `only if' part of the assertion is immediate from (2) of above Proposition 4.1. 

\end{rem}

In the following proposition, we consider the special situation of pure ring extension, i.e. when the base ring is a direct summand of the extension ring. We observe how Zariski closedness of the support of Lyubeznik functors as well as local cohomologies can be transferred from ring to its base ring.

\begin{prop}
Let $R\rightarrow S$ be an extension of Noetherian rings such that $R$ is a direct summand of $S$.\newline
(1) For Lyubeznik functor $\T$, if $\Supp_S \T(S)$ is a Zariski closed subset of $\Spec S$ then $\Supp_R \T(R)$ is a Zariski closed subset of $\Spec R$.\newline 
(2) In particular, for every $i\geq 0$, if $\Supp_{S} H^i_J(S)$ is a Zariski closed subset of $\Spec S$ for every ideal $J\subset S$, then for every $i\geq 0$, $\Supp_{R} H^i_{I}(R)$ is also a Zariski closed subset of $\Spec R$ for every ideal $I\subset R$. 
\end{prop}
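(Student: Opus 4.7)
The plan is to show that $\T(R)$ sits inside $\T(S)$ as an $R$-submodule (in fact as an $R$-direct summand) and then apply the two parts of Lemma~3.1 together with Lemma~2.3. Since $R$ is an $R$-module direct summand of $S$, write $S=R\oplus C$ as $R$-modules. Local cohomology $H^i_I(\cdot)$ is an additive functor on $R$-modules, so $H^i_I(S)=H^i_I(R)\oplus H^i_I(C)$ as $R$-modules. Using the \v{C}ech-complex computation, a set of generators of $I$ also generates $IS$, hence $H^i_{IS}(S)$ (the $S$-local cohomology) and $H^i_I(S)$ (computed over $R$) are canonically isomorphic as $R$-modules. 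In particular $H^i_I(R)$ is an $R$-direct summand of $H^i_{IS}(S)$, which already settles part~(2): taking $J=IS$ in the hypothesis gives $\Supp_S H^i_{IS}(S)$ Zariski closed, so by Lemma~2.3 the set $\min_S H^i_{IS}(S)$ is finite; Lemma~3.1(2) then gives $\min_R H^i_{IS}(S)$ finite; Lemma~3.1(1) applied to the $R$-submodule inclusion $H^i_I(R)\hookrightarrow H^i_{IS}(S)$ yields $\min_R H^i_I(R)$ finite; and a final application of Lemma~2.3 gives $\Supp_R H^i_I(R)$ Zariski closed.

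For part~(1), I would extend the above inclusion to Lyubeznik functors by induction on the length $t$ of the composition $\T=\T_1\circ\cdots\circ\T_t$. The key observation is that for every $S$-module $N$ and every ideal $I\subset R$, $H^i_{IS}(N)$ is canonically isomorphic as an $R$-module to $H^i_I(N)$ computed over $R$, and the connecting maps in the long exact sequence associated to a pair of closed subsets $\Z_1\subset\Z_2$ of $\Spec R$ are compatible with this identification. Combined with the additivity of each $H^i_\Z$ on direct sums and the exactness of the long exact sequence, this yields $\T(S)=\T(R)\oplus\T(C)$ as $R$-modules, where on the right $\T$ denotes the corresponding $R$-side Lyubeznik functor applied to $R$ and to $C$. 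In particular $\T(R)$ is an $R$-submodule of $\T(S)$.

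With this in hand the rest of the argument mirrors the proof of~(2): from $\Supp_S\T(S)$ Zariski closed one gets by Lemma~2.3 that $\min_S\T(S)$ is finite, then by Lemma~3.1(2) that $\min_R\T(S)$ is finite, then $\min_R\T(R)\subseteq\min_R\T(S)$ by Lemma~3.1(1), and finally $\Supp_R\T(R)$ is Zariski closed by Lemma~2.3.

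The main obstacle is the inductive step in part~(1): one must verify that the decomposition $\T(S)=\T(R)\oplus\T(C)$ of $R$-modules is preserved under each of the three admissible operations in the definition of a Lyubeznik functor, namely local cohomology $H^i_\Z$ and the kernel/image/cokernel appearing in the long exact sequence for a pair $\Z_1\subset\Z_2$. Once the compatibility of that long exact sequence with the $R$-module splitting $S=R\oplus C$ is pinned down, the induction is routine and the rest of the proof reduces to the minimal-prime bookkeeping already codified in Section~2 and Lemma~3.1.
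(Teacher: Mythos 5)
Your proof is correct and follows essentially the same route as the paper: reduce to minimal primes via Lemma~2.3, use Lemma~3.1(2) to pass from $\min_S\T(S)$ to $\min_R\T(S)$, then Lemma~3.1(1) via the inclusion $\T(R)\hookrightarrow\T(S)$, and apply Lemma~2.3 again. The only difference is that the paper simply asserts ``$\T(R)$ is a direct summand of $\T(S)$ as an $R$-module'' with no justification, whereas you sketch an actual argument for it (additivity of each $H^i_{\Z}$ over $R$, independence of base, functoriality of the long exact sequence over a direct sum, and induction on the length of the composition $\T=\T_1\circ\cdots\circ\T_t$); that sketch is sound and fills a genuine gap in the paper's exposition rather than introducing a new approach.
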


\begin{proof}
(1) From Lemma 2.3 it is sufficient to consider the set of minimal primes. Since $R$ is a direct summand of $S$, we find $T(R)$ is also direct summand of $T(S)$ as an $R$-module. From (2) of Lemma 3.1, $\min_R \T(S)\subset \{P\cap R:P\in \min_S \T(S)\}$. Thus $\min_R \T(S)$ is finite, since from hypothesis we get that $\min_S \T(S)$ is finite. Since $T(R)$ is an $R$-submodule $T(S)$, from (1) of Lemma 3.1 we find that $\min_R \T(R)$ is finite. Thus $\Supp_R \T(R)$ is a Zariski closed subset of $\Spec R$.

(2) This is immediate from (1).
\end{proof}

Now, we state our next important result of this section, regarding the behaviour of the Zariski closedness of the support of Lyubeznik functors under pure base change, when the base ring is local. 

\begin{thm}
Let $R\rightarrow S$ be a pure extension of Noetherian rings, where $(R,\m)$ is a local ring.\newline
(1) For Lyubeznik functor $\T$, if $\Supp_S \T(S)$ is a Zariski closed subset of $\Spec S$ then $\Supp_R \T(R)$ is a Zariski closed subset of $\Spec R$.\newline 
(2) In particular, for every $i\geq 0$, if $\Supp_{S} H^i_J(S)$ is a Zariski closed subset of $\Spec S$ for every ideal $J\subset S$, then for every $i\geq 0$, $\Supp_{R} H^i_{I}(R)$ is also a Zariski closed subset of $\Spec R$ for every ideal $I\subset R$. 
\end{thm}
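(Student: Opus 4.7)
The plan is to adapt the strategy of the preceding direct-summand Proposition: by Lemma 2.3, the hypothesis that $\Supp_{S}\T(S)$ is Zariski closed is equivalent to finiteness of $\min_{S}\T(S)$, and the desired conclusion reduces to finiteness of $\min_{R}\T(R)$. If I can exhibit $\T(R)$ as an $R$-submodule of $\T(S)$ (viewing $\T(S)$ as an $R$-module via $R\rightarrow S$), then part (1) of Lemma 3.1 gives $\min_{R}\T(R)\subset\min_{R}\T(S)$ and part (2) gives $\min_{R}\T(S)\subset\{P\cap R:P\in\min_{S}\T(S)\}$, which is finite by hypothesis. Assertion (2) of the theorem is then the special case $\T=H^{i}_{\V(J)}$.

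The substantive task is to construct the $R$-linear embedding $\T(R)\hookrightarrow\T(S)$. I would argue by induction on the length $t$ of the composition $\T=\T_{1}\circ\cdots\circ\T_{t}$. For the base case, if $\T=H^{i}_{\Z}$ with $\Z=\V(I)$, then Corollary 6.8 of \cite{HR} applied to $M=R$ supplies the required $R$-linear injection $H^{i}_{I}(R)\hookrightarrow H^{i}_{I}(S)\cong H^{i}_{IS}(S)$, exactly as used in the proof of Theorem 3.4. If $\T$ is instead a kernel, image, or cokernel of a connecting map coming from a pair $\Z_{1}\subset\Z_{2}$ of closed subsets, the two long exact sequences for $R$ and $S$ assemble into a commutative ladder whose vertical rungs are the already-known injections coming from the local cohomology case, and a straightforward diagram chase produces the induced injection on kernels, images, and cokernels.

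For the inductive step, write $\T=\T_{1}\circ\T'$ with $\T'$ shorter, so that $\T'(R)\hookrightarrow\T'(S)$ as $R$-modules by induction. When $\T_{1}$ is of kernel/image/cokernel type, the same diagram chase propagates the injection. The delicate case is $\T_{1}=H^{i}_{\Z}$ with $i\geq 1$: injectivity of $H^{i}_{\Z}(\T'(R))\to H^{i}_{\Z}(\T'(S))$ does not follow formally from $\T'(R)\hookrightarrow\T'(S)$, since it would require some control over $H^{i-1}_{\Z}$ of the cokernel. This is the main obstacle, and is precisely the point at which the locality of $R$ should enter in an essential way.

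A plausible route to overcome this obstacle is to reduce to the complete local setting via the faithfully flat map $R\rightarrow\hat{R}$ to the $\m$-adic completion: by part (3) of Corollary 1, Zariski closedness of $\Supp_{R}\T(R)$ is equivalent to that of $\Supp_{\hat{R}}\T(\hat{R})$, and the base change $\hat{R}\rightarrow\hat{R}\otimes_{R}S$ of the pure extension $R\rightarrow S$ by the faithfully flat map $R\rightarrow\hat{R}$ is again pure. In the complete local setting one has access to stronger structural tools---approximate Gorenstein properties and the resulting direct-summand splitting of pure extensions---which should allow the direct-summand Proposition to take over and supply the embedding directly, bypassing the inductive difficulty with higher local cohomology. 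This reduction step via completion is what I expect to do the real work in the proof.
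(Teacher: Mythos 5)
Your ``plausible route'' via completion is exactly the paper's argument: pass to $\hat{R}\rightarrow\hat{S}$ (the $\m$-adic completions), note that this map is pure by Corollary 6.13 of \cite{HR}, that a pure extension of a complete Noetherian local ring splits by Exercise 9.5 of \cite{Hu2} (this does not need the approximately-Gorenstein machinery, which the paper reserves for the cyclically pure case in Theorem 3.8), apply the direct-summand Proposition 3.5, and then transport Zariski closedness through the two faithfully flat completion maps using Corollary 1(3). Your abandoned first plan correctly identified its own obstruction at higher local cohomology of a composed functor, and the completion reduction you sketch is precisely the fix the paper implements.
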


\begin{proof}
(1) For pure extension $R\rightarrow S$, consider the extension $\hat{R}\rightarrow \hat{S}$, where $\hat{S}$ is completion of $S$ in $\m$-adic topology. From Corollary 6.13 of \cite{HR}, we find that $\hat{R}\rightarrow \hat{S}$ is also pure. From Exercise 9.5 in page 64 of \cite{Hu2}, we find that extension $\hat{R}\rightarrow \hat{S}$ actually splits. For Lyubeznik funtor $\T$, consider $\T(R)$. Since $\Supp_{S} \T(S)$ is a Zariski closed subset of $\Spec S$, from faithfully flat ring extension $S\rightarrow \hat{S}$, $\Supp_{\hat{S}} \T(\hat{S})$ is also a Zariski closed subset of $\Spec \hat{S}$, see (3) of Corollary 1. Since, extension $\hat{R}\rightarrow \hat{S}$ actually splits, using Proposition 3.5 above, we have the closedness of $\Supp_{\hat{R}} \T(\hat{R})$. Now, again by (3) of Corollary 1, from the result of faithfully flat base change we observe that $\Supp_{R} \T(R)$ is a Zariski closed subset of $\Spec R$. Thus we conclude.

(2) It is immediate from (1).
\end{proof}

We need the following lemma to prove our next main result. This lemma is already proved in \cite{Bh}, for completeness, we present the proof.

\begin{lemma}
Let $R\rightarrow S$ be a cyclically pure ring extension of Noetherian rings, where $R$ is a local ring with maximal ideal $\m$. Then $\hat{R}\rightarrow \hat{S}$ is also cyclically pure, where $\hat{S}$ is completion of $S$ in $\m$-adic topology.
\end{lemma}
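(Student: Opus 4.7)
My plan is to reduce the question, for a fixed ideal $J\subset \hat R$, to cyclic purity of $R\to S$ applied to a family of ideals of $R$ that approximate $J$, and then to close by Krull's intersection theorem in the Noetherian local ring $\hat R$. Concretely, fix $J\subset \hat R$ and, for each $n\geq 1$, consider the thickening $J_n=J+\m^n\hat R$. Using the canonical identification $\hat R/\m^n\hat R=R/\m^n$, the image of $J_n$ in $R/\m^n$ lifts to an ideal $I_n\subseteq R$ with $I_n\supseteq \m^n$, and a direct verification shows $J_n=I_n\hat R$, hence $\hat R/J_n=R/I_n$. Because $I_n\supseteq\m^n$, the standard identity $\hat S/\m^n\hat S=S/\m^nS$ passes to the further quotient to yield $\hat S/J_n\hat S=\hat S/I_n\hat S=S/I_nS$.

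With these identifications in place, cyclic purity of $R\to S$ applied to each $I_n$ gives $R/I_n\hookrightarrow S/I_nS$, which translates into an injection $\hat R/J_n\hookrightarrow \hat S/J_n\hat S$ for every $n$. Now suppose $x\in\hat R$ satisfies $x\in J\hat S$; then $x\in J\hat S+\m^n\hat S=J_n\hat S$ for every $n$, so the $n$-th injection forces $x\in J_n=J+\m^n\hat R$. Consequently $x\in \bigcap_n (J+\m^n\hat R)$. Since $\hat R$ is a Noetherian local ring with maximal ideal $\m\hat R$, Krull's intersection theorem applied to $\hat R/J$ identifies this intersection with $J$, and therefore $x\in J$, establishing the desired injectivity.

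The step I expect to be the main obstacle is purely bookkeeping: verifying cleanly the two completion-theoretic identities $J_n=I_n\hat R$ and $\hat S/I_n\hat S=S/I_nS$ with $I_n\supseteq\m^n$. Both are consequences of the fact that $\m$-adic completion acts as the identity on rings killed by some power of $\m$, but they are the pivot of the whole argument, converting an arbitrary ideal of $\hat R$ into honest ideals of $R$ against which the cyclic-purity hypothesis can be invoked. Beyond these identifications, the only further ingredients required are that $\hat R$ is Noetherian local with maximal ideal $\m\hat R$ and Krull's intersection theorem.
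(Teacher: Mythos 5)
Your proof is correct and follows essentially the same route as the paper: reduce to ideals containing a power of $\m$ (equivalently, $\m$-primary ideals) via Krull's intersection theorem, pull those back to honest ideals of $R$ through the identification $\hat R/\m^n\hat R\cong R/\m^n$, and transfer the cyclic-purity injection across the completion using that modules killed by $\m^n$ are unchanged. The paper packages this as a standalone lemma (``if all $\m$-primary ideals contract correctly then the map is cyclically pure'') and then checks the $\m$-primary case for $\hat R\to\hat S$ by tensoring $R/\q\hookrightarrow S/\q S$ with the flat $\hat R$; you instead inline that reduction for a fixed $J\subset\hat R$ by thickening to $J_n=J+\m^n\hat R$ and invoking the standard identity $\hat S/\m^n\hat S=S/\m^n S$ directly. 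Your handling of $\hat S$ is marginally cleaner, since the paper writes $\hat S=S\otimes_R\hat R$, which need not hold on the nose when $S$ is not module-finite over $R$, whereas you only use the quotient identity $\hat S/\m^n\hat S=S/\m^n S$, which does hold for Noetherian $S$.
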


\begin{proof}
We observe the following fact: Consider a ring homomorphism $A\rightarrow B$ of Noetherian rings where $(A,\m)$ is a Noetherian local ring with maximal ideal $\m$. Let for every $\m$-primary ideal $\q\subset A$, $\q B\cap A=\q$, then for every ideal $I$ of $A$, $IB\cap A=I$ i.e. ring homomorphism is cyclically pure. 

To see this we mention the following fact: For any ideal $I\subset A$, $I$ can be written as an arbitrary intersection of $\m$-primary ideals. This is due to Krull-Intersection Theorem. As for example one can think $I=\bigcap^{\infty}_{n=1}(I+{\m}^n)$. Let $I=\bigcap_{i\in \Omega} {\q}_i$ where $\Omega$ is an arbitrary index set. Thus $I\subset IB\cap A\subset (\bigcap_{i\in \Omega} {\q}_i)B\cap A\subset (\bigcap_{i\in \Omega} ({\q}_iB))\cap A=\bigcap_{i\in \Omega} ({\q}_iB\cap A)=\bigcap_{i\in \Omega} {\q}_i=I$. This implies $IB\cap A=I$. 

Since $R\rightarrow S$ is cyclically pure, for every $\m$-primary ideal ${\q}_i$ we have ${\q}_iS\cap R={\q}_i$. Let $\hat{R}$ be the completion of $R$ in $m$-adic topology. Consider the following commutative diagram.  
\[
  \xymatrix
{
  & \hat{R} 
    \ar@{->}[r]
 & S\otimes\hat{R}=\hat{S}  
 \\
  & R
	\ar@{->}[u]
\ar@{->}[r]
 & S
  \ar@{->}[u]      
 }
\]
Going to the ring $\hat{R}$ with a maximal ideal $\hat{\m}$ via faithfully flat ring extension $R\rightarrow \hat{R}$ we find that every $\hat{\m}$-primary ideal $\hat{\q}_i$ is of the form $\hat{\q}_i={\q}_i\hat{R}$. Consider the ring homomorphism $\hat{R}\rightarrow \hat{S}=S\otimes \hat{R}$. Here ${\q}_iS\cap R={\q}_i$ is equivalent to $R/{\q}_i \rightarrow S/{\q}_i S$ is injective. Tensoring with $\hat{R}$ the last map remains injective i.e. we get $\hat{R}/\hat{\q}_i \rightarrow \hat{S}/\hat{{\q}_i}\hat{S}$ is injective. Thus for every $\hat{\m}$-primary ideal we have $\hat{{\q}_i}\hat{S}\cap \hat{R}=\hat{{\q}_i}$. Thus from above paragraph of the proof we get $\hat{R}\rightarrow \hat{S}$ is also a cyclically pure ring extension.
\end{proof}

We recall that a Noetherian semilocal ring is analytically unramified if its completion (by its Jacobson radical) is reduced see \cite{CAofM}. Now, we state the last important result of this section, regarding the behaviour of the Zariski closedness of the support of local cohomologies under cyclically pure base change.

\begin{thm}
Consider a cyclically pure ring homomorphism $R\rightarrow S$ of Noetherian rings where $R$ is a analytically unramifed Noetherian local ring with maximal ideal $\m$.\newline 
(1) For Lyubeznik functor $\T$, if $\Supp_S \T(S)$ is a Zariski closed subset of $\Spec S$ then $\Supp_R \T(R)$ is a Zariski closed subset of $\Spec R$.\newline 
(2) In particular, for every $i\geq 0$, if $\Ass_{S} H^i_J(S)$ is a Zariski closed subset of $\Spec S$, then for every $i\geq 0$, $\Ass_{R} H^i_{I}(R)$ is also a Zariski closed subset of $\Spec R$.  
\end{thm}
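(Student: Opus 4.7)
The plan is to reduce to the pure local case already handled in Theorem 3.6 by passing to the $\m$-adic completions on both sides. Write $\hat{R}$ and $\hat{S}$ for the $\m$-adic completions of $R$ and $S$. Since $R\rightarrow \hat{R}$ and $S\rightarrow \hat{S}$ are faithfully flat, part (3) of Corollary 1 gives two biconditionals: Zariski closedness of $\Supp_S \T(S)$ in $\Spec S$ is equivalent to that of $\Supp_{\hat{S}} \T(\hat{S})$ in $\Spec \hat{S}$, and Zariski closedness of $\Supp_{\hat{R}} \T(\hat{R})$ in $\Spec \hat{R}$ is equivalent to that of $\Supp_R \T(R)$ in $\Spec R$. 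So the whole problem reduces to showing that closedness of $\Supp_{\hat{S}} \T(\hat{S})$ forces closedness of $\Supp_{\hat{R}} \T(\hat{R})$.

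The next step is to upgrade cyclic purity to honest purity on the completed side. Lemma 3.7 already gives that $\hat{R}\rightarrow \hat{S}$ remains cyclically pure. The hypothesis that $R$ is analytically unramified means precisely that $\hat{R}$ is reduced, and $\hat{R}$ is of course a complete Noetherian local ring. By Remark 3(1), cyclic purity coincides with purity for homomorphisms out of a complete semilocal reduced ring, so $\hat{R}\rightarrow \hat{S}$ is in fact a pure extension.

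Now I have a pure extension $\hat{R}\rightarrow \hat{S}$ with $\hat{R}$ local, and closedness of $\Supp_{\hat{S}} \T(\hat{S})$ was already obtained in Step 1. Applying Theorem 3.6 to this pure extension yields closedness of $\Supp_{\hat{R}} \T(\hat{R})$ in $\Spec \hat{R}$, and the second biconditional from Step 1 propagates this back to closedness of $\Supp_R \T(R)$ in $\Spec R$, establishing (1). Assertion (2) is then the immediate specialization of (1) obtained by taking $\T = H^i_I$ for varying $i$ and $I$.

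The only genuinely new ingredient beyond Theorem 3.6 is Step 2, i.e.\ converting cyclic purity into full purity after completion; this is where the analytically unramified hypothesis is used, via Remark 3(1). Everything else is a mechanical sandwich of the faithfully flat transfer results of Section 2 around Theorem 3.6, so I expect no substantial obstacle once Step 2 is in place.
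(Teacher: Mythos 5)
Your proof is correct and follows essentially the same route as the paper's. The only difference is one of modularity: where you invoke Theorem~3.6 as a black box to handle the pure local extension $\hat{R}\rightarrow\hat{S}$, the paper inlines that theorem's argument, passing explicitly through the splitting of $\hat{R}\rightarrow\hat{S}$ (Exercise 9.5 of Huneke) and then applying the direct-summand Proposition~3.5. Both versions use Lemma~3.7 to keep cyclic purity under completion, the analytically-unramified hypothesis to upgrade cyclic purity to purity over the complete reduced local ring $\hat{R}$, and the faithfully-flat transfer of Corollary~1(3) on both ends. Your packaging is a touch cleaner since it reuses Theorem~3.6 instead of repeating its proof, and it remains valid because $\hat{R}$ is already complete so the extra completion inside Theorem~3.6's proof is harmless.
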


\begin{proof}
(1) From Lemma 3.7, we find that $\hat{R}\rightarrow \hat{S}$ is also cyclically pure. Since $\hat{R}$ is also reduced, using \cite{Ho1} or from above Remark 3, $\hat{R}\rightarrow \hat{S}$ is actually a pure extension. Moreover, from Exercise 9.5 in page 64 of \cite{Hu2}, we find that extension $\hat{R}\rightarrow \hat{S}$ actually splits. For Lyubeznik funtor $\T$, consider $\T(R)$. Since $\Supp_{S} \T(S)$ is a Zariski closed subset of $\Spec S$, from faithfully flat ring extension $S\rightarrow \hat{S}$, $\Supp_{\hat{S}} \T(\hat{S})$ is also a Zariski closed subset of $\Spec \hat{S}$, see (3) of Corollary 1. Since, extension $\hat{R}\rightarrow \hat{S}$ actually splits, using Proposition 3.5 above, we have the closedness of $\Supp_{\hat{R}} \T(\hat{R})$. Now, again by (3) of Corollary 1, from the result of faithfully flat base change we observe that $\Supp_{R} \T(R)$ is a Zariski closed subset of $\Spec R$. Thus we conclude.

(2) This assertion is immediate from (1).
\end{proof}

\section{behaviour of zariski closedness of the support under pure extension: examples}

There are two possible sources form where we can get examples of pure extensions. One of them is faithfully flat extension and in another case, base ring is a direct summand of extension ring. In this section, we focus on the following examples of pure extensions:

We assume base ring $R$ is a direct summand of extension ring $S$ as an $R$-module i.e. $S= R\oplus T$, but $T$ is also an ideal of $S$. Equivalently, one can assume that $T$ is a multiplicatively closed subset of $S$. We refer this situation as `extension $R\rightarrow S$ satisfies (\textbf{*})'. Under this special extension, we observe that for Lyubeznik functor $\T$, $\Supp_{R} \T(R)$ is a Zariski closed subset in $\Spec R$ if and only if $\Supp_{S} (\T(R)\otimes_R S)$ is a Zariski closed subset in $\Spec S$, see Theorem 4.2 (and also compare this result with that of Theorem 3.4). 

Here, we list few examples of `extension $R\rightarrow S$ satisfies (\textbf{*})'.

\begin{example}
(1) Let $S=\oplus^{\infty}_{i=0} S_i$ be a graded algebra over $R$ where $S_0= R$. Set $T= \oplus^{\infty}_{i\geq 0} S_i$. Clearly $T$ is also an ideal of $S$ and we find that extension $R\rightarrow S$ satisfies (\textbf{*}).

(1) When $S$ is an trivial extension of $R$, see page 191 of \cite{CRTofM}, extension $R\rightarrow S$ satisfies (\textbf{*}). 

(2) If $S$ is direct product of rings $R$ and $T$ i.e. $S= R\times T$ such that $T$ is also an $R$ algebra, then extension $R\rightarrow S$ satisfies (\textbf{*}). 
\end{example}

\begin{rem}
In the above situation when `extension $R\rightarrow S$ satisfies (\textbf{*})', $T$, being an ideal of $S$, we can take quotient ring $S/T$ and it turns out to be $R$. Moreover, identity map $R\rightarrow R$ can be factored through $R\rightarrow R\oplus T= S\rightarrow S/T= R$. Consider $\q\in \Spec S$ and set $\p= \q\cap R$ in $\Spec R$. Thus for multiplicatively closed set $W= S-\q$ of $S$, we get $W^{-1}R= R_{\p}$, together with the ring homo $S_{\q}\rightarrow R_{\p}$. Again in the same way identity map $R_{\p}\rightarrow R_{\p}$ can be factored through $R_{\p}\rightarrow R_{\p}\oplus T_{\p}= S_{\p}\rightarrow S_{\q}\rightarrow (S/T)_{\q}= R_{\p}$.
\end{rem}

It is well known that for arbitrary ring extension $R\stackrel{f}\rightarrow S$ and for finitely generated $R$-module $M$, $\Supp_S (M\otimes_R S)= {f^*}^{-1}(\Supp_R M)$, where we have the natural map $\Spec S\stackrel{f^*}{\rightarrow}\Spec R$, (see second assertion of Proposition 19 of Section II.4.4 of \cite{Bo}) 

In the following proposition, we see that above is also true for arbitrary $R$-module $M$, but in that case we have to take ring extension $R\rightarrow S$ satisfying (\textbf{*}) above.

\begin{prop}
Consider the following ring extension $R\rightarrow S$ satisfying (\textbf{*}). Then, the following assertions are true.

(1) For any arbitrary $R$-module $M$, $\Supp_S (M\otimes_R S)= {f^*}^{-1}(\Supp_R M)$.

(2) If $\Supp_R M$ is closed in $\Spec R$, then $\Supp_S(M\otimes_R S)$ is also closed in $\Spec S$.

(3) In particular, If $\Supp_R \T(R)$ is a Zariski closed subset in $\Spec R$, then $\Supp_S(\T(R)\otimes_R S)$ is also a Zariski closed subset in $\Spec S$.
\end{prop}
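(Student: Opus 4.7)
My plan is to derive everything from the factorization highlighted in the preceding remark: under hypothesis $(\textbf{*})$, for every $\q\in\Spec S$ with contraction $\p=\q\cap R$ the identity of $R_{\p}$ admits a factorization
\[
R_{\p}\longrightarrow S_{\q}\longrightarrow R_{\p}.
\]
In particular $S_{\q}$ is canonically an $R_{\p}$-algebra whose structural map is split by an $R_{\p}$-linear retraction. Granted this factorization, I expect the three assertions to follow cleanly.

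For (1), I would fix $\q\in\Spec S$, set $\p=f^{*}(\q)=\q\cap R$, and compute $(M\otimes_{R} S)_{\q}\cong M\otimes_{R} S_{\q}\cong M_{\p}\otimes_{R_{\p}} S_{\q}$, where the second identification uses the $R_{\p}$-algebra structure on $S_{\q}$. This immediately gives the implication $M_{\p}=0\Rightarrow (M\otimes_{R} S)_{\q}=0$. For the converse, tensoring the identity factorization with $M$ over $R$ produces $R_{\p}$-module maps $M_{\p}\to M\otimes_{R} S_{\q}\to M_{\p}$ whose composite is $\mathrm{id}_{M_{\p}}$; hence $M_{\p}$ is a retract of $M\otimes_{R} S_{\q}$ and the vanishing of the latter forces $M_{\p}=0$. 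Combining the two implications would prove $\Supp_{S}(M\otimes_{R} S)=(f^{*})^{-1}(\Supp_{R} M)$.

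For (2), closedness of $\Supp_{R} M$ means $\Supp_{R} M=\V(I)$ for some ideal $I\subset R$, and then $(f^{*})^{-1}(\V(I))=\V(IS)$ is closed in $\Spec S$; by (1) this set equals $\Supp_{S}(M\otimes_{R} S)$. Assertion (3) is then the specialization $M=\T(R)$ of (2). I expect the main obstacle to lie entirely in verifying the global retraction $S_{\q}\to R_{\p}$ for \emph{every} $\q\in\Spec S$, since this is what the decomposition $S=R\oplus T$ with $T$ an ideal of $S$ is supposed to supply via the quotient $S\to S/T=R$ appropriately localized; once that retraction is in hand, the proof for arbitrary (non-finitely-generated) $M$ is essentially formal.
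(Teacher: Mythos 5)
Your argument and the paper's are essentially the same: both hinge on a retraction $S_{\q}\to R_{\p}$ (with $\p=\q\cap R$) for \emph{every} $\q\in\Spec S$, obtained by localizing the projection $S\to S/T=R$, and you rightly flag this as the crux. However, the retraction only exists when $T\subset\q$. If there is $t\in T\setminus\q$, then $t$ becomes a unit in $S_{\q}$ while $t\mapsto 0$ under $S\to S/T=R$; hence the image of $S\setminus\q$ in $R$ contains $0$ and the localization $(S/T)_{\q}$ is the zero ring, not $R_{\p}$. Thus the factorization $R_{\p}\to S_{\q}\to R_{\p}$ claimed in the preceding remark, and used by you and the paper in the step $M_{\p}\otimes_{R_{\p}}S_{\q}=0\Rightarrow M_{\p}=0$, breaks down on the whole open set $D(T)=\Spec S\setminus\V(T)$.

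In fact assertion (1) fails for one of the paper's own listed examples of $(\textbf{*})$. Take $R=k[x]$, $S=k[x]\times k[y]$ with $R\to S$ given by $f\mapsto(f,f(0))$, and $T=0\times k[y]$; take $M=k(x)$, the fraction field of $R$. Since $x$ acts invertibly on $k(x)$ and by zero on $T\cong k[y]$, one has $M\otimes_R T=0$, so $M\otimes_R S\cong k(x)$ with $S$ acting through the first projection $S\to k[x]$, whence $\Supp_S(M\otimes_R S)=\V(T)$. On the other hand $\Supp_R M=\Spec R$, so ${f^*}^{-1}(\Supp_R M)=\Spec S\supsetneq\V(T)$. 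Your reasoning (like the paper's) does correctly give $\Supp_S(M\otimes_R S)\cap\V(T)={f^*}^{-1}(\Supp_R M)\cap\V(T)$, but it gives no control on $D(T)$, so assertion (1) is not proved (and is in fact false); the deductions of (2) and (3), which are taken immediately from (1), therefore also do not go through as written.
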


\begin{proof}
(1) Let $R\stackrel{f}{\rightarrow}S$ is a ring homomorphism such that $S= R\oplus T$. Consider the natural map $\Spec S\stackrel{f^*}{\rightarrow}\Spec R$. From first assertion of Proposition 19 of Section II.4.4 of \cite{Bo}, we find that $\Supp_S(M\otimes_R S)\subset {f^*}^{-1}(\Supp_R M)$. For the reverse inclusion, let $\q\in {f^*}^{-1}(\Supp_R M)$. Set $\p=\q\cap R= f^*(\q)\in \Spec R$ and we have $M_{\p}\neq 0$. Now consider $(M\otimes_R S)_{\q}= (M\otimes_R S)\otimes_S S_{\q}= M\otimes_R S_{\q}= M\otimes_R R_{\p}\otimes_{R_{\p}} S_{\q}= M_{\p}\otimes_{R_{\p}} S_{\q}$. Now observe the following: $M_{\p}= M_{\p}\otimes_{R_{\p}} R_{\p}= M_{\p}\otimes_{R_{\p}} S_{\q}\otimes_{S_{\q}} R_{\p}$ (see Remark 8 above). Thus $M_{\p}\otimes_{R_{\p}} S_{\q}= 0$ implies that $M_{\p}= 0$. This gives the desired result.

(2) Since the map $f^*$ continuous, this assertion is immediate.  

(3) It is just a special case of (2).
\end{proof}

\begin{rem}
Here $\T(S)$ is not necessarily same with $\T(R)\otimes_R S$, since in general, $R\rightarrow S$ is not flat. So we cannot compare them. But, we can compare their supports, see results in the next section.
\end{rem}

Now, we state the main result of this section.

\begin{thm}
Consider the ring extension $R\rightarrow S$ of Noetherian rings satisfying (\textbf{*}) above. For Lyubeznik functor $\T$, $\Supp_{R} \T(R)$ is a Zariski closed subset in $\Spec R$ if and only if $\Supp_{S} (\T(R)\otimes_R S)$ is a Zariski closed subset in $\Spec S$.
\end{thm}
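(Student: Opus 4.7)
The plan is to split into the two directions. The ``only if'' direction is already proven as Proposition 4.1~(3), so there is nothing new to do there: once $\Supp_R \T(R)$ is Zariski closed in $\Spec R$, continuity of $f^\ast \colon \Spec S \to \Spec R$ immediately forces $\Supp_S(\T(R) \otimes_R S) = {f^\ast}^{-1}(\Supp_R \T(R))$ to be closed in $\Spec S$.

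For the ``if'' direction, the strategy is to reduce the assertion to a finiteness statement about minimal primes and then chase the two tools of Lemma 3.1. First I would invoke Lemma 2.3 to reduce the problem to showing that $\min_R \T(R)$ is finite, given that $\min_S(\T(R) \otimes_R S)$ is finite. Next, viewing $\T(R) \otimes_R S$ as an $S$-module, I would apply Lemma 3.1~(2) to deduce that $\min_R(\T(R) \otimes_R S) \subset \{P \cap R : P \in \min_S(\T(R) \otimes_R S)\}$ is still finite as a set of primes of $R$.

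The key structural step is to exploit condition (\textbf{*}): since $S = R \oplus T$ as $R$-modules, tensoring the decomposition with $\T(R)$ yields $\T(R) \otimes_R S = \T(R) \oplus (\T(R) \otimes_R T)$ as $R$-modules. In particular, $\T(R)$ embeds as an $R$-submodule (even as a direct summand) of $\T(R) \otimes_R S$. Applying Lemma 3.1~(1) to this inclusion gives the containment
\[
\min_R \T(R) \subset \min_R (\T(R) \otimes_R S).
\]
Combined with the finiteness established in the previous step, this forces $\min_R \T(R)$ to be finite, and Lemma 2.3 closes the argument.

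I do not expect any serious obstacle here: the entire proof is assembled from Lemma 2.3, both parts of Lemma 3.1, and the direct summand structure supplied by (\textbf{*}); there is no need to invoke Hochster--Roberts, completion, or anything from the Lyubeznik machinery beyond treating $\T(R)$ as an abstract $R$-module. The only place where one must be a little careful is in not confusing $\T(R) \otimes_R S$ with $\T(S)$ (a distinction already flagged in Remark preceding the theorem), but this plays no role because the statement is phrased purely in terms of $\T(R) \otimes_R S$.
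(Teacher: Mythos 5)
Your ``only if'' direction is the same as the paper's (Proposition 4.1). For the ``if'' direction, the overall scaffolding also matches the paper — reduce to finiteness of minimal associated primes via Lemma 2.3, descend from $\min_S$ to $\min_R$ via Lemma 3.1(2), and then aim for the containment $\min_R \T(R)\subset\min_R(\T(R)\otimes_R S)$ — but your route to that containment is genuinely different. The paper regards $\T(R)$ as an $R$-submodule of the $S$-module $\T(S)$ and invokes Lemma 3.3 (which rests on purity and Proposition 6.5 of Hochster--Roberts). You instead distribute $\otimes_R S$ over $S=R\oplus T$ to write $\T(R)\otimes_R S=\T(R)\oplus(\T(R)\otimes_R T)$, realizing $\T(R)$ as a direct summand of $\T(R)\otimes_R S$, and then cite Lemma 3.1(1). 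Your route is simpler and exploits the full strength of (\textbf{*}) rather than bare purity.

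However, there is a gap: Lemma 3.1(1) is false as stated, even for direct summands. Take $R=k[x]$, $M=R\oplus R/(x)$, $N=0\oplus R/(x)$. Then $\Ass_R N=\{(x)\}$ so $\min_R N=\{(x)\}$, while $\Ass_R M=\{(0),(x)\}$ so $\min_R M=\{(0)\}$, and $(x)\notin\min_R M$. What is true for a submodule (and what the paper's one-line argument actually proves) is $\Ass_R N\subset\Ass_R M$; minimal elements of the smaller set need not be minimal in the larger set. So the step ``apply Lemma 3.1(1)'' does not establish $\min_R\T(R)\subset\min_R(\T(R)\otimes_R S)$. Note the paper's Lemma 3.3 is designed to produce precisely this containment via a different (purity-based) argument, so in the paper's own proof the burden falls there and not on Lemma 3.1(1); your substitution therefore changes which lemma carries the load, and the replacement does not hold.

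Under hypothesis (\textbf{*}) the ``if'' direction can in fact be proved cleanly without touching minimal primes at all, which would repair the argument. Proposition 4.1(1) gives $\Supp_S(\T(R)\otimes_R S)={f^*}^{-1}(\Supp_R\T(R))$. Since $T$ is an ideal of $S$ with $S/T\cong R$, the projection $\pi\colon S\to R$ induces a closed embedding $\iota=\pi^*\colon\Spec R\hookrightarrow\Spec S$ with $f^*\circ\iota=\mathrm{id}_{\Spec R}$, whence
\[
\Supp_R\T(R)=\iota^{-1}\bigl({f^*}^{-1}(\Supp_R\T(R))\bigr)=\iota^{-1}\bigl(\Supp_S(\T(R)\otimes_R S)\bigr),
\]
which is closed in $\Spec R$ whenever $\Supp_S(\T(R)\otimes_R S)$ is closed in $\Spec S$. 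This uses only continuity of $\iota$ and the retraction supplied by (\textbf{*}).
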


\begin{proof}
The `only if' part of the assertion is immediate from (2) of above Proposition 4.1. 

For `if' part, proof is almost similar to that of Theorem 3.4 and for sake of completeness we repeat it: From hypothesis, using Lemma 2.3 we get $\min_S (\T(R)\otimes_R S)$ is finite. From Lemma 3.1 we get $\min_R (\T(R)\otimes_R S)$ is also finite. 
Since $R$ is a direct summand of $S$, $\T(R)$ is a direct summand of $\T(S)$, i.e. $\T(R)$ is an $R$-submodule of $\T(S)$. So, we can apply Proposition 3.3 and we find that $\min_R \T(R)\subset\min_R (\T(R)\otimes_R S)$ and latter is finite. Thus $\min_R \T(R)$ is also finite and thus again by Lemma 2.3, $\Supp_{R} \T(R)$ is also a Zariski closed subset in $\Spec R$.  
\end{proof}

\section{comparison of $\Supp_S(\T(R)\otimes_R S)$ and $\Supp_S \T(S)$}

From Lemma 3.1 of \cite{Ly1}, we know that for flat and faithfully flat ring extension $R\rightarrow S$, for Lyubeznik functor $\T$, $\T(R)\otimes_R S= \T(S)$ and as a consequence $\Supp_S(\T(R)\otimes_R S)= \Supp_S \T(S)$. But, even for a pure extension $R\rightarrow S$, neither we can have a  nice relation between $\T(R)\otimes_R S$ and $\T(S)$, nor we can compare the sets $\Supp_S(\T(R)\otimes_R S)$ and $\Supp_S \T(S)$. But, when the extension $R\rightarrow S$ satisfies (\textbf{*}) (for its definition, see previous section), at least in this situation, we can compare $\Supp_S(\T(R)\otimes_R S)$ and $\Supp_S \T(S)$. 

We observe the containment $\Supp_S(\T(R)\otimes_R S)$ in $\Supp_S \T(S)$, when the ring extension $R\rightarrow S$ satisfies (\textbf{*}) and we present the result in the following proposition.

\begin{prop}
Consider the ring extension $R\rightarrow S$ of Noetherian rings satisfying (\textbf{*}) above. Then, for Lyubeznik functor $\T$, $\Supp_S(\T(R)\otimes_R S)\subset \Supp_S \T(S)$.
\end{prop}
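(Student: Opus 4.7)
My plan is to construct the natural $S$-linear comparison map $\phi\colon \T(R)\otimes_R S\to \T(S)$ coming from functoriality of the Lyubeznik functor along the ring homomorphism $R\to S$, and to show it is injective — from which the desired containment $\Supp_S(\T(R)\otimes_R S)\subseteq \Supp_S\T(S)$ follows at once. Concretely, on each local-cohomology constituent of $\T$ one has the canonical morphism $H^i_I(R)\otimes_R S\to H^i_{IS}(S)$; these glue through the long exact sequences defining $\T$ into an $S$-linear $\phi$ with $\phi(m\otimes s)=s\cdot\iota(m)$, where $\iota\colon \T(R)\to\T(S)$ is the induced $R$-linear map on Lyubeznik functors.

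To gain control over $\phi$ I would use the ring-theoretic splitting $\pi\colon S\to R$ provided by $(\textbf{*})$ (its kernel is the ideal $T$). Functoriality of $\T$ applied to the identity $R\hookrightarrow S\xrightarrow{\pi}R$ yields $\T(\pi)\circ\iota=\mathrm{id}_{\T(R)}$, so $\iota$ is a split $R$-linear injection — this is the same observation powering the proof of Proposition~3.5, and it gives $\T(R)$ as an $R$-module direct summand of $\T(S)$. The composition $\T(\pi)\circ\phi\colon \T(R)\otimes_R S\to\T(R)$ then simplifies to $m\otimes s\mapsto \pi(s)\cdot m$, which is precisely the canonical $R$-linear retraction of the inclusion $\T(R)\hookrightarrow \T(R)\otimes_R S$, $m\mapsto m\otimes 1$, induced by $\pi$. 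Consequently $\ker\phi$ lies in the complementary summand $\T(R)\otimes_R T$ of the $R$-module decomposition $\T(R)\otimes_R S=\T(R)\oplus(\T(R)\otimes_R T)$.

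To finish I would verify that $(\ker\phi)_Q=0$ at every $Q\in\Supp_S(\T(R)\otimes_R S)$ by a case split on whether $T\subseteq Q$. If $T\subseteq Q$, the $(\textbf{*})$-structure localizes cleanly: setting $P=Q\cap R$, one has $S_Q=R_P\oplus T\cdot S_Q$ with $T\cdot S_Q$ an ideal of $S_Q$ and $S_Q/(T\cdot S_Q)=R_P$, so the localized extension $R_P\to S_Q$ again satisfies $(\textbf{*})$. The direct-summand argument applied to the Lyubeznik functor over $R_P\to S_Q$ identifies $\T(R)_P$ as an $R_P$-module direct summand of $\T(S)_Q$; combined with Proposition~4.1, which delivers $\T(R)_P\neq 0$ from $Q\in\Supp_S(\T(R)\otimes_R S)$, this gives $\T(S)_Q\neq 0$. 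If instead $T\not\subseteq Q$, some $t_0\in T\setminus Q$ becomes a unit in $S_Q$; because the $S$-action on $\T(R)\otimes_R S$ carries the summand $\T(R)$ into $\T(R)\otimes_R T$ by multiplication by elements of $T$, invertibility of $t_0$ in $S_Q$ forces $(\T(R)\otimes_R S)_Q=0$, so this case is vacuous.

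The principal obstacle is the last sub-case: rigorously justifying the vanishing $(\T(R)\otimes_R S)_Q=0$ when $T\not\subseteq Q$. One argues that $J=\Ann_R(t_0)\subseteq R$ — nontrivial in the $(\textbf{*})$-setup when $T$ is not $R$-flat — lies in the kernel of $R\to S_Q$, so $R\to S_Q$ factors through $R/J$; the desired vanishing then reduces to $J\cdot\T(R)=\T(R)$, a surjectivity property that holds for Lyubeznik modules because their elements are (iterated) $I$-torsion local-cohomology classes on which multiplication by the defining ideal acts surjectively in the relevant cases.
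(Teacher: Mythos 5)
Your blueprint — split injection $\iota\colon\T(R)\hookrightarrow\T(S)$, base-change map $\phi$, reduction via Proposition~4.1 to $\p=Q\cap R\in\Supp_R\T(R)$, and a case split on whether $T\subseteq Q$ — is essentially the paper's own argument, but you are more careful than the paper in isolating the problematic case $T\not\subseteq Q$. The paper's proof silently assumes the retraction $S\to S/T=R$ localizes to a retraction $S_Q\to R_\p$ (this is what its ``$W^{-1}R=R_\p$'' remark and the displayed identity $\T(R)\otimes_S S_Q=\T(R_\p)$ amount to), but that identification only holds when $T\subseteq Q$; if $T\not\subseteq Q$ then $(S/T)\otimes_S S_Q=0$, so $\T(R)\otimes_S S_Q=0$, which is not $(\T(R))_\p$. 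Thus the paper has exactly the gap you point to, just unacknowledged.

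Your proposed repair of that case, however, does not work, and in fact the statement is false. The vanishing $(\T(R)\otimes_R S)_Q=0$ for $T\not\subseteq Q$ is equivalent (since $t_0$ is then a unit in $S_Q$) to $(\T(R)\otimes_R T)_Q=0$, and your reduction to the surjectivity $J\cdot\T(R)=\T(R)$ with $J=\Ann_R(t_0)$ has no justification for a general Lyubeznik module. Concretely, take $R=k[x,y]/(xy)$, $T'=R/(y)\cong k[x]$ as an $R$-algebra, $S=R\times T'$, $T=0\times T'$ (the direct-product instance of $(\textbf{*})$ listed in the paper), and $\T=H^0_{(x)}$. Then $\T(R)=(y)\cong R/(x)$, so $\T(R)\otimes_R S\cong R/(x)\times R/(x,y)$ as $S$-modules, whereas $\T(S)=H^0_{(x)}(R)\times H^0_{(x)}(T')=\T(R)\times 0$ because $T'=k[x]$ is a domain. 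For the prime $Q=R\times (x)T'$ of $S$ one has $T\not\subseteq Q$, $t_0=(0,1)$, $J=\Ann_R(t_0)=(y)$, and $J\T(R)=y\,k[y]\neq k[y]=\T(R)$; moreover $S_Q\cong T'_{(x)}$, $(\T(R)\otimes_R S)_Q\cong k\neq 0$, while $(\T(S))_Q=0$. So $Q\in\Supp_S(\T(R)\otimes_R S)\setminus\Supp_S\T(S)$, and Proposition~5.1 as stated fails. A correct statement needs an extra hypothesis forcing $T\subseteq Q$ for every $Q$ in question — for instance that $T$ is nilpotent (the trivial-extension case), or that one restricts attention to the closed set $V(T)$.
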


\begin{proof}
Let $R\stackrel{f}{\rightarrow}S$ is a ring homomorphism such that $S=R\oplus T$. Consider the natural map $\Spec S\stackrel{f^*}{\rightarrow}\Spec R$. For $\q\in \Spec S$, set $\p= \q\cap R= f^*(\q)$. Let $\q\in \Supp_S(\T(R)\otimes_R S)$. From first assertion of Proposition 19 of Section II.4.4 of \cite{Bo}, we find that $\q\in {f^*}^{-1}(\Supp_R(\T(R)))$ and equivalently $\p= f^*(\q)\in \Supp_R \T(R)$. Now, from Remark 6 above, we find that $R= S/T$ as well and for ideal $I\subset R$, image of $IS$ in $R= S/T$ is $I$ again. Thus $\T(R)$ can be thought as an $S$-module. Since $S= R\oplus T$, we have the following relation between the $S$-modules: $\T(S)= \T(R)\oplus W$. Tensoring with $S_{\q}$ over $S$ we get, $(\T(S))_{\q}= \T(S_{\q})= \T(S)\otimes_S S_{\q}= (\T(R)\otimes_S S_{\q})\oplus (W\otimes_S S_{\q})= \T(R\otimes_S S_{\q})\oplus (W\otimes_S S_{\q})= \T(R_{\p})\oplus W_{\q}=(\T(R))_{\p}\oplus W_{\q}$ (see Remark 6, above). Thus $\T(R))_{\p}\neq 0$ implies that $(\T(S))_{\q}\neq 0$. Thus we conclude. 
\end{proof}

Lastly, we observe that when ring extension $R\rightarrow S$ satisfying (\textbf{*}), Zariski closedness of $\Supp_S \T(S)$ implies Zariski closedness of $\Supp_S(\T(R)\otimes_R S)$. 

\begin{prop}
Consider the ring extension $R\rightarrow S$ satisfying (\textbf{*}). Then for Lyubeznik functor $\T$, if $\Supp_S \T(S)$ is Zariski closed subset in $\Spec S$, then $\Supp_S(\T(R)\otimes_R S)$ is also Zariski closed subset in $\Spec S$. 
\end{prop}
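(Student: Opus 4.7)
The plan is a short two-step chain through results already established earlier in the paper. The hypothesis (\textbf{*}) gives $S = R \oplus T$ with $T$ also an ideal of $S$; in particular $R$ is a direct summand of $S$ as an $R$-module. This is exactly the setting of Proposition 3.5, so I apply its part (1) to the extension $R \to S$ and the functor $\T$: from the assumed Zariski closedness of $\Supp_S \T(S)$ in $\Spec S$, I conclude that $\Supp_R \T(R)$ is Zariski closed in $\Spec R$.

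With the closedness of $\Supp_R \T(R)$ now in hand, I invoke Theorem 4.2, whose full hypothesis (\textbf{*}) is satisfied by assumption. Its forward (``only if'') direction—whose content is really the identification $\Supp_S(\T(R) \otimes_R S) = (f^*)^{-1}(\Supp_R \T(R))$ established in Proposition 4.1, together with the continuity of $f^* \colon \Spec S \to \Spec R$—yields that $\Supp_S(\T(R) \otimes_R S)$ is Zariski closed in $\Spec S$, which is the desired conclusion.

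There is no real obstacle here; the proof is essentially a bookkeeping exercise. The only point worth emphasising is that (\textbf{*}) is strong enough to supply \emph{both} ingredients of the chain at once: the direct-summand condition that powers Proposition 3.5, and the decomposition $S = R \oplus T$ with $T$ an ideal that powers Proposition 4.1 and hence Theorem 4.2. The underlying technical work—propagation of finiteness of minimal primes between $R$ and $S$ under base change and under passage to a direct summand—has already been absorbed into those earlier results, so nothing further need be done by hand.
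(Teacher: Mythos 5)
Your proof is correct and takes essentially the same route as the paper: deduce closedness of $\Supp_R \T(R)$ from the direct-summand condition, then push forward via $\Supp_S(\T(R)\otimes_R S) = (f^*)^{-1}(\Supp_R \T(R))$. The only difference is that you cite Proposition 3.5(1) as a black box for the first step, whereas the paper re-runs that argument inline via Lemma 3.1(1) and (2) before invoking Proposition 4.1; your version is the cleaner bookkeeping.
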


\begin{proof}
Here, $R$ is a direct summand of $S$, so $\T(R)\subset \T(S)$. This further implies that $\min_R \T(R)\subset \min_R \T(S)$ (see (1) of Lemma 3.1). Now $\min_R \T(S)\subset \{q\cap R: q\in\min_S \T(S)\}$ (see (2) of Lemma 3.1). From hypothesis, we get $\min_S \T(S)$ is finite and so is $\min_R \T(R)$. Thus we get $\Supp_R \T(R)$ is closed in $\Spec R$ and from Proposition 4.1 above, we get $\Supp_S(\T(R)\otimes_R S)$ is also closed in $\Spec S$.
\end{proof}

\ 

{\textbf{Acknowledgement:}}\newline

I would like to thank Tony J. Puthenpurakal for his invaluable comments and suggestions.


\end{document}